\newtheorem{thm}{Theorem}[section]
\newtheorem{lem}[thm]{Lemma}
\newtheorem{prop}[thm]{Proposition}
\theoremstyle{definition}
\newtheorem{defn}[thm]{Definition}
\theoremstyle{remark}
\newtheorem{rem}[thm]{Remark}
\numberwithin{equation}{section}
\begin{document}

%
%
%
%
%
%
%
%
%

\title[Hardy Spaces and Cubature Formulas]
 {Polyharmonic Hardy Spaces\\ on the Complexified Annulus\\  and Error Estimates\\ of Cubature Formulas}

\author[Ognyan Kounchev]{Ognyan Kounchev}

\address{%
Acad. G. Bonchev, bl. 3\\
1113 Sofia\\
Bulgaria\\
and IZKS, University of Bonn}

\email{kounchev@math.bas.bg}

\thanks{The first-named author thanks the Alexander von Humboldt Foundation and Project DO-02-275 with Bulgarian NSF}
\author{Hermann Render}
\address{School of Mathematical Sciences\br
Belfield\br
Dublin 4\br
Ireland}
\email{hermann.render@ucd.ie}
\subjclass{Primary 65D30, 32A35; Secondary 41A55}

\keywords{Hardy space, Numerical Integration, Cubature formulas, Error estimate}

\date{March 20, 2012}
\dedicatory{To the memory of Professor Werner Hau\ss mann}

\begin{abstract}
The present paper has a twofold contribution: first, we introduce a new
concept of Hardy spaces on a multidimensional complexified annular domain which is closely
related to the annulus of the Klein-Dirac quadric important in Conformal
Quantum Field Theory. Secondly, for functions in these Hardy spaces, we 
provide error estimate for the polyharmonic Gau\ss -Jacobi cubature formulas, 
which have been introduced in previous papers. 
\end{abstract}
\maketitle


\section{Introduction}

In the classical one-dimensional theory of quadrature formulas there are
different approaches to the estimation of the quadrature formulas of Gauss
type. The first approach established already by A. Markov estimates the error
of a quadrature formula for differentiable functions in $C^{N}\left(
I\right)  $ defined on the interval $I$ by means of its $N-$th derivative, cf.
Krylov \cite{krylov}, chapter $7.1$, or Davis \cite[p. 344]{davis}. However
this approach is usually not very practical beyond derivatives of order five,
see \cite{stroud}. The second approach estimates the error for certain classes
of functions $f$ which are \emph{analytic} on some open set $D$ in
$\mathbb{C}$ containing the interval $I,$ cf. \cite{davisRabinowitz}, chapter
$4.6,$ see also \cite{krylov}, chapter $12.2.$

Approximation of integrals in the multivariate case is a much more difficult
task. In Numerical Analysis, instead of quadrature formula the notion of
cubature formula is often used, see \cite{sobolev}, \cite{stroudBook},
\cite{sobolev2} and the recent survey \cite{cools}. In contrast to the
univariate case there is no satisfactory error analysis available in the
multivariate case, cf. \cite{stroudBook}, \cite{bakhvalov}, and part $4$ in
the last Russian edition of the classical monograph \cite{krylov}. In passing
we mention that the area of quadrature formulas for harmonic functions in
which Werner Hau\ss mann has worked, are an interesting multidimensional
alternative and we refer to \cite{armitageGoldstein},
\cite{goldsteinHaussmann} on this topic.

\subsection{Preliminaries}

In \cite{kounchevRenderArkiv} and \cite{kounchevRenderArxiv} we have
introduced a new multivariate cubature formulae $C_{N}\left(  f\right)  $
depending on a parameter $N\in\mathbb{N}$ which approximates the integral
\begin{equation}%
{\displaystyle\int\limits_{A_{a,b}}}
f\left(  x\right)  d\mu\left(  x\right)  \label{eqintegral}%
\end{equation}
for continuous functions $f:A_{a,b}\rightarrow\mathbb{C}$ defined on the
\emph{annular region}
\begin{equation}
A_{a,b}=\left\{  x\in\mathbb{R}^{d}:a<\left\vert x\right\vert <b\right\} ,
\label{Aab}%
\end{equation}
where
$\left\vert x\right\vert =\sqrt{x_{1}^{2}+....+x_{d}^{2}}$ is the euclidean
norm of $x=\left(  x_{1},...,x_{d}\right)  \in\mathbb{R}^{d}.$
In \cite{kounchevRenderArkiv}, \cite{kounchevRenderArxiv}, we have considered
a special class of signed measures $\mu$ with support in $A_{a,b}$, the
so-called \emph{pseudo-positive measures} (see definition below)  however this
formula is easy to extend to almost all signed measure. The exact definition
of the cubature formula $C_{N}\left(  f\right)  $ will be explained in Section
\ref{Scubature}. One aim of this paper is to provide an error analysis for a
class of functions on the annular region $A_{a,b}$ which exhibit a certain
type of analytical behavior.

In order to give the reader the needed mathematical background we have to
recall some terminology. Let
\[
\mathbb{S}^{d-1}:=\left\{  x\in\mathbb{R}^{d}:\left\vert x\right\vert
=1\right\}
\]
be the unit sphere endowed with the rotation invariant measure $d\theta$. We
shall write $x\in\mathbb{R}^{d}$ in spherical coordinates $x=r\theta$ with
$\theta\in\mathbb{S}^{d-1}.$ Let $\mathcal{H}_{k}\left(  \mathbb{R}%
^{d}\right)  $ be the set of all harmonic homogeneous complex-valued
polynomials of degree $k.$ Then $f\in\mathcal{H}_{k}\left(  \mathbb{R}%
^{d}\right)  $ is called a \emph{solid harmonic} and the restriction of $f$ to
$\mathbb{S}^{d-1}$ a \emph{spherical harmonic} of degree $k$ and we set
\begin{equation}
a_{k}:=\dim\mathcal{H}_{k}\left(  \mathbb{R}^{d}\right)  , \label{eqdim}%
\end{equation}
see \cite{steinWeiss}, \cite{seeley}, \cite{askey}, \cite{okbook} for details.
Throughout the paper we shall assume
\begin{equation}
Y_{k,\ell}:\mathbb{R}^{d}\rightarrow\mathbb{R},\ell=1,...,a_{k}, \label{Ykl}%
\end{equation}
is an \emph{orthonormal basis} of $\mathcal{H}_{k}\left(  \mathbb{R}%
^{d}\right)  $ with respect to the scalar product%
\[
\left\langle f,g\right\rangle _{\mathbb{S}^{d-1}}:=\int_{\mathbb{S}^{d-1}%
}f\left(  \theta\right)  \overline{g\left(  \theta\right)  }d\theta.
\]
We shall often use the trivial identity $Y_{k,\ell}\left(  x\right)
=r^{k}Y_{k\ell}\left(  \theta\right)  $ for $x=r\theta.$ The class of
pseudo-positive measures used for our cubature formula $C_{N}\left(  f\right)
$ is now defined in the following way: a signed measure $\mu$ with support in
$A_{a,b}\subset$ $\mathbb{R}^{d}$ is \emph{pseudo-positive with respect to the
orthonormal basis} $Y_{k,\ell},\ell=1,...,a_{k}$, $k\in\mathbb{N}_{0}$ if the
inequality
\begin{equation}
\int_{\mathbb{R}^{d}}h\left(  \left\vert x\right\vert \right)  Y_{k,\ell
}\left(  x\right)  d\mu\left(  x\right)  \geq0 \label{defpspos}%
\end{equation}
holds for every non-negative continuous function $h:\left[  a,b\right]
\rightarrow\left[  0,\infty\right)  $ and for all $k\in\mathbb{N}_{0}$,
$\ell=1,2,...,a_{k}.$ Let us note that every signed measure $d\mu$ with
bounded variation may be represented (non-uniquely) as a difference of two
pseudo-positive measures. We refer to \cite{kounchevRenderArkiv} for
instructive examples of pseudo-positive measures.

The cubature formula $C_{N}\left(  f\right)  $ approximating the integral
(\ref{eqintegral}) is based on the \emph{Laplace--Fourier series }of the
continuous function $f:A_{a,b}\rightarrow\mathbb{C}$, defined by the formal
expansion
\begin{equation}
f\left(  r\theta\right)  =\sum_{k=0}^{\infty}\sum_{\ell=1}^{a_{k}}f_{k,\ell
}\left(  r\right)  Y_{k,\ell}\left(  \theta\right)  \label{frtheta}%
\end{equation}
where the \emph{Laplace--Fourier coefficient }$f_{k,\ell}\left(  r\right)
$\emph{\ }is defined by
\begin{equation}
f_{k,\ell}\left(  r\right)  =\int_{\mathbb{S}^{d-1}}f\left(  r\theta\right)
Y_{k,\ell}\left(  \theta\right)  d\theta\label{eqfourier}%
\end{equation}
for any positive real number $r$ with $a<r<b$ and $a_{k}$ is defined in
(\ref{eqdim}). There is a strong interplay between algebraic and analytic
properties of the function $f$ and those of the Laplace-Fourier coefficients
$f_{k,\ell}$. For example, if $f\left(  x\right)  $ is a polynomial in the
variable $x=\left(  x_{1},...,x_{d}\right)  $ then the Laplace-Fourier
coefficient $f_{k,\ell}$ is of the form $f_{k,\ell}\left(  r\right)
=r^{k}p_{k,\ell}\left(  r^{2}\right)  $ where $p_{k,\ell}$ is a univariate
polynomial, see e.g. in \cite{steinWeiss} or \cite{sobolev}. Hence, the
\emph{Laplace-Fourier} series (\ref{frtheta}) of a polynomial $f\left(
x\right)  $ is equal to
\begin{equation}
f\left(  x\right)  =\sum_{k=0}^{\deg f}\sum_{\ell=1}^{a_{k}}p_{k,\ell
}(\left\vert x\right\vert ^{2})Y_{k,\ell}\left(  x\right)  \label{gauss}%
\end{equation}
where $\deg f$ is the total degree of $f$ and $p_{k,\ell}$ is a univariate
polynomial of degree $\leq\deg f-k.$ This representation is often called the
\emph{Gauss representation.} A similar formula is valid for a much larger
class of functions. Let us recall that a function $f:G\rightarrow\mathbb{C}$
defined on an open set $G$ in $\mathbb{R}^{d}$ is called \emph{polyharmonic of
order} $N$ if $f$ is $2N$ times continuously differentiable and
\begin{equation}
\Delta^{N}u\left(  x\right)  =0 \label{dlNux=0}%
\end{equation}
for all $x\in G$ where $\Delta=\frac{\partial^{2}}{\partial x_{1}^{2}%
}+...+\frac{\partial^{2}}{\partial x_{d}^{2}}$ is the Laplace operator and
$\Delta^{N}$ the $N$-th iterate of $\Delta.$ The theorem of Almansi states
that for a polyharmonic function $f$ of order $N$ defined on the ball
$B_{R}=\left\{  x\in\mathbb{R}^{d}:\left\vert x\right\vert <R\right\}  $ there
exist univariate polynomials $p_{k,\ell}\left(  r\right)  $ of degree $\leq
N-1$ such that
\begin{equation}
f\left(  x\right)  =\sum_{k=0}^{\infty}\sum_{\ell=1}^{a_{k}}p_{k,\ell
}(\left\vert x\right\vert ^{2})Y_{k,\ell}\left(  x\right)  \label{eqAlmansi}%
\end{equation}
where convergence of the sum is uniform on compact subsets of $B_{R},$ see
e.g. \cite{sobolev}, \cite{avanissian}, \cite{aron}.

In this paper an important role is played by the representation of
polyharmonic functions on the annular region $A_{a,b}$ developed by Vekua and
Sobolev (see \cite{vekua}, \cite{sobolev}): It is especially simple if the
dimension $d$ of the euclidean space $\mathbb{R}^{d}$ is \emph{odd. }Then for
any polyharmonic function $f$ of order $N$ defined on the annular region
$A_{a,b}$ there exist univariate $\widetilde{u}_{k,\ell}\left(  \cdot\right)
,$ $\widetilde{\widetilde{u}}_{k,\ell}\left(  \cdot\right)  $ polynomials of
degree $\leq N-1\ $such that
\begin{equation}
u\left(  x\right)  =\sum_{k=0}^{\infty}\sum_{\ell=1}^{a_{k}}\left(
\widetilde{u}_{k,\ell}\left(  \left\vert x\right\vert ^{2}\right)
r^{k}+\widetilde{\widetilde{u}}_{k,\ell}\left(  \left\vert x\right\vert
^{2}\right)  r^{-d-k+2}\right)  Y_{k,\ell}\left(  \theta\right)
\label{uxAlmansiAnnulus}%
\end{equation}
where convergence of the sum is uniform on compact subsets of $A_{a,b}.$ An
equivalent way to express this is to write
\[
u\left(  r\theta\right)  =\sum_{k,\ell}\left(
{\displaystyle\sum_{j=0}^{N-1}}
c_{k,\ell;j}r^{k+2j}+%
{\displaystyle\sum_{j=0}^{N-1}}
d_{k,\ell;j}r^{-d-k+2+2j}\right)  Y_{k,\ell}\left(  \theta\right)  .
\]
In particular, one obtains a representation of the polynomials if all
$d_{k,\ell;j}$ are zero.

\subsection{Complexification of the annulus and the annulus of the Klein-Dirac
quadric}

We want to study analytical extensions of functions $f$ defined on the annular
region using the Laplace-Fourier series. Instead of working with functions $f$
which are \emph{a priori} analytically extendible to a fixed domain $U$ in the
complex space $\mathbb{C}^{d}$ we shall require only that we can extend the
function $x=r\theta\longmapsto f\left(  r\theta\right)  $ to an analytic
function $z\theta\longmapsto f\left(  z\theta\right)  ,$ so we only complexify
the radial variable $r$ to a complex variable $z.$ Then one should expect from
(\ref{eqfourier}) that the Laplace-Fourier coefficient $f_{k,\ell}\left(
r\right)  $ extends to an analytic function of one variable.

To make things more precise let us introduce the basis functions
\begin{equation}
R_{j}\left(  r\right)  :=\left\{
\begin{array}
[c]{c}%
r^{2j+k}\qquad\quad\text{for }j=0,1,...,N-1\\
r^{-d-k+2+2\left(  j-N\right)  }\qquad\text{for }j=N,N+1,...,2N-1.
\end{array}
\right.  \label{RjrAnnulus}%
\end{equation}
By representation (\ref{uxAlmansiAnnulus}) we see that the functions
\[
\left\{  R_{j}\left(  r\right)  Y_{k,\ell}\left(  \theta\right)
,\quad\text{for }k\geq0,\ell=1,2,...,a_{k},\ j=0,1,...,2N-1\right\}
\]
form a basis for the functions polyharmonic of order $N$ in the annulus
$A_{a,b}.$\footnote{The functions $R_{j}$ for $j=0,1,..,N-1$ are called
polyharmonic Kelvin transforms of the functions $R_{N+j},$ cf. \cite{aron}, p.
$6.$}

For every fixed $k$ we introduce the following convenient notation for the set
of indices to be used further:
\begin{equation}
\mathcal{R}_{k}:=\left\{  j\in\mathbb{Z}:j=k+2m,\ \text{or }%
j=-d-k+2+2m\ \text{for an integer }m\geq0\right\}  . \label{RieszSetAnnulus}%
\end{equation}

As said above, the main feature of our approach is to work with a
complexification of the functions $f$ defined in the annulus $A_{a,b}$ of the
form $f\left(  z\theta\right)  .$ For polyharmonic functions $f$
representation (\ref{uxAlmansiAnnulus}) provides a direct description of this
complexification by considering the \textbf{complexified basis} functions
$R_{j}\left(  z\right)  $; we consider the linear space generated by the
functions:
\begin{equation}
V_{k,d}:=\operatorname*{span}\left\{  \left\{  z^{2j+k}\right\}
_{j=0}^{\infty},\ \left\{  z^{-d-k+2+2j}\right\}  _{j=0}^{\infty}\right\}
\label{Vkn}%
\end{equation}

Now in these new notations, a generic element in the space $V_{k,d}$ has the
form $\widetilde{u}_{k,\ell}\left(  z^{2}\right)  z^{k}+\widetilde
{\widetilde{u}}_{k,\ell}\left(  z^{2}\right)  z^{-d-k+2}$ where $\widetilde
{u}_{k,\ell}\left(  \cdot\right)  $ and $\widetilde{\widetilde{u}}_{k,\ell
}\left(  \cdot\right)  $ are polynomials. Obviously, $V_{k,d}\subset
\mathcal{H}\left(  \mathbb{A}_{a,b}\right)  ,$ $V_{k,d}\subset\mathcal{H}%
\left(  \mathbb{A}_{0,b}\right)  ,$ and $V_{k,d}\subset\mathcal{H}\left(
\mathbb{C}\setminus\left\{  0\right\}  \right)  $ where $\mathcal{H}\left(
\Omega\right)  $ denotes the space of all \textbf{analytic functions} in a
domain $\Omega\subset\mathbb{C}.$

Thus, we have come naturally to the next objective of the present research,
the Function theory in the space $\mathbb{C}\times\mathbb{S}^{n-1}.$ The main
interest in the present paper is devoted to the \textbf{complexified}
\textbf{annulus} $\mathcal{A}_{a,b}$ given by
\begin{equation}
\mathcal{A}_{a,b}:=\left\{  \left(  z,\theta\right)  :\ a<\left\vert
z\right\vert <b,\ \theta\in\mathbb{S}^{d-1}\right\}  =\mathbb{A}_{a,b}%
\times\mathbb{S}^{d-1}; \label{AnnulusComplexified}%
\end{equation}
here $\mathbb{A}_{a,b}$ is the annulus in $\mathbb{C}$,
\[
\mathbb{A}_{a,b}:=\left\{  z\in\mathbb{C}:a<\left\vert z\right\vert
<b\right\}  .
\]

Our main novelty will be a multivariate generalization of $H^{2}\left(
\mathbb{A}_{a,b}\right)  $ called \textbf{(polyharmonic) Hardy space on the
annulus }$\mathcal{A}_{a,b}$ to be introduced in Definition
\ref{DHardyAnnulusRn}. We will denote this space by $H_{L}^{2}\left(
\mathcal{A}_{a,b}\right)  $ since it depends on a parameter $L>1.$ The name
\emph{polyharmonic} comes from the fact that  $H_{L}^{2}\left(  \mathcal{A}%
_{a,b}\right)  $ is obtained as a limit of the complexifications of the
polyharmonic functions in the annulus (\ref{uxAlmansiAnnulus}), namely, of the
space:
\begin{equation}
\mathcal{V}:=\operatorname*{span}\left\{  z^{k+2j}Y_{k,\ell}\left(
\theta\right)  ,\ z^{-d-k+2+2j}Y_{k,\ell}\left(  \theta\right)  :\ k,j\geq
0,\ell=1,2,...,a_{k}\right\}  ,\label{V}%
\end{equation}
where $z\in\mathbb{C}$ and $\theta\in\mathbb{S}^{d-1}.$ Clearly, the elements
of $\mathcal{V}$ are finite sums of the type
\begin{equation}
u\left(  z,\theta\right)  =\sum_{k,\ell}\left(  \widetilde{u}_{k,\ell}\left(
z^{2}\right)  z^{k}+\widetilde{\widetilde{u}}_{k,\ell}\left(  z^{2}\right)
z^{-d-k+2}\right)  Y_{k,\ell}\left(  \theta\right)
,\label{uzthetaAlmansiAnnulus}%
\end{equation}
where $\widetilde{u}_{k,\ell}\left(  \cdot\right)  $ and $\widetilde
{\widetilde{u}}_{k,\ell}\left(  \cdot\right)  $ are algebraic polynomials. In
view of the Polyharmonic Paradigm announced in \cite{okbook}, the space
$H_{L}^{2}\left(  \mathcal{A}_{a,b}\right)  $  generalizes the classical Hardy
spaces which are obtained as limits of (Laurent) polynomials, where the degree
of a polynomial is replaced by a degree of polyharmonicity. 

Let us summarize briefly our objectives: The main approach to the
\emph{polyharmonic Hardy space} $H_{L}^{2}\left(  \mathcal{A}_{a,b}\right)  $
is to generate it by the complexified finite sums (\ref{uzthetaAlmansiAnnulus}%
), by introducing an appropriate inner product (and norm). This Hardy space
will be a Hilbert space and we will provide a \emph{Cauchy type kernel}, which
is the analog and a generalization to the Hua-Aronszajn kernel in the ball
(cf. \cite{aron}, p. 125, Corollary $1.1$). Let us note that the last is a
multidimensional generalization of the classical Cauchy kernel $\frac{1}{z}$
from Complex Analysis (various about Cauchy kernels see in \cite{kerzmanStein}%
, \cite{krantzHarmonic}).

\begin{rem}
\label{Rkdq} Our results are closely related to the Function theory on the
\textbf{Klein quadric} (called sometimes Klein-Dirac quadric) given by:\
\begin{equation}
\operatorname{KDQ}:=\left\{  z\theta:\ z\in\mathbb{C},\ \theta\in
\mathbb{S}^{d-1}\right\}  =\mathbb{C}\times\mathbb{S}^{d-1}/\mathbb{Z}%
_{2},\label{KDQ}%
\end{equation}
which may be considered as a subset in $\mathbb{C}^{d}.$ The factor
$\mathbb{Z}_{2}$ means that in the set $\mathbb{C}\times\mathbb{S}^{d-1}$ we
identify the elements $\left(  z,\theta\right)  \approx\left(  -z,-\theta
\right)  $ and the elements $z\theta=\left(  z\theta_{1},z\theta
_{2},...,z\theta_{d}\right)  $ of $\operatorname{KDQ}$ are points in
$\mathbb{C}^{d}.$ The space $\operatorname{KDQ}$ is very important for us
since a genuine counterpart of the present Hardy spaces exists only on  the
ball $\mathbb{D}\times\mathbb{S}^{n-1}/\mathbb{Z}_{2}$ of $\operatorname{KDQ}$
(and not on the ball  $\mathbb{D}\times\mathbb{S}^{n-1}$) which is due to the
fact that the polyharmonic functions in the ball in $\mathbb{R}^{d}$ have
representation of the type (\ref{uxAlmansiAnnulus}) with terms $z^{k+2j}%
Y_{k,\ell}\left(  \theta\right)  =z^{2j}Y_{k,\ell}\left(  z\theta\right)  $
depending on $z\theta.$ In a forthcoming research
\cite{kounchevRenderKleinDirac},  \cite{kounchevRenderBook} we study Hardy
spaces on the ball\ in $\operatorname{KDQ}.$ For the Klein quadric the ball
$\mathbb{D}\times\mathbb{S}^{n-1}/\mathbb{Z}_{2}$ is the compactified
four-dimensional \textbf{Minkowski space-time} $\mathbb{S}^{1}\times
\mathbb{S}^{3}/\mathbb{Z}_{2}.$ It is hoped that the present research provides
a useful framework for CFT by approximating the fields by elements of the
Hardy space in the annulus $\mathcal{A}_{a,b},$ see Remark \ref{RHardyBall} below.

The Klein-Dirac quadric $\operatorname{KDQ}$ defined in (\ref{KDQ})
for arbitrary dimension $d,$ has been originally introduced in a special case
by Felix Klein in his Erlangen program in $1870$, where he put forth his
correspondence between the lines in complex projective 3-space and a general
quadric in projective 5-space. The physical relevance of the quadric and the
relation to the conformal motions of compactified Minkowski space-time had
been exploited by Paul Dirac in $1936$ \cite{dirac}. The Klein-Dirac quadric
plays an important role also in Twistor theory, where it is related to the
complexified compactified Minkowski space \cite{penrose}. Apparently, the term
"Klein-Dirac quadric" for arbitrary dimension $d,$ has been coined by the
theoretical physicist I. Todorov, cf. e.g. \cite{nikolovTodorov},
\cite{nikolovTodorov2}. In these references important aspects of the Function
theory on the ball in $\operatorname{KDQ}$ have been considered in the context
of Conformal Quantum Field Theory (CFT). In the context of CFT Laurent
expansions appear in a natural way as the field functions in the higher
dimensional conformal vertex algebras (using a complex variable
parametrization of compactified Minkowski space); see in particular formula
(4.43) in \cite{nikolovTodorov3}, as well as the references
\cite{nikolovTodorov}, \cite{nikolovTodorov2}.
\end{rem}

The paper is organized as follows: in Section \ref{Sbackground} we recall
background material about the Hardy space $H^{2}\left(  \mathbb{A}%
_{a,b}\right)  .$ In Section \ref{ScomponentH} we define the "component"
subspaces $H^{2,k}\left(  \mathbb{A}_{a,b}\right)  $ of $H^{2}\left(
\mathbb{A}_{a,b}\right)  $ which will be necessary for the multivariate case.
In Section \ref{SHK} we introduce the polyharmonic Hardy space $H_{L}%
^{2}\left(  \mathcal{A}_{a,b}\right)  $ on the complexified annulus
$\mathcal{A}_{a,b}.$ We prove that it is a Hilbert space, a maximum principle,
and infinite-differentiability of the functions in $H_{L}^{2}\left(
\mathcal{A}_{a,b}\right)  .$ In Section \ref{ScauchyComponent} we construct a
Cauchy type kernel for the component spaces $H^{2,k}\left(  \mathbb{A}%
_{a,b}\right)  $ and in Section \ref{ScauchyHK} a Cauchy type kernel for the
space $H_{L}^{2}\left(  \mathcal{A}_{a,b}\right)  .$ In Section
\ref{Scubature} we prove the main result of the paper, about the error
estimate of the polyharmonic Gau\ss -Jacobi cubature formulas introduced in
\cite{kounchevRenderArxiv}, \cite{kounchevRenderArkiv}.

\section{The inner product and norm in the annulus in $\mathbb{C}$
\label{Sbackground}}

Let us recall the classical Hardy space $H^{2}\left(  \mathbb{A}_{a,b}\right)
$ on the annulus $\mathbb{A}_{a,b}$ and its basic properties, cf.
\cite{sarason}, p. $4$; see also \cite{axler}.

Let us remind the usual mean value of a function $f$ defined in $\mathbb{A}%
_{a,b}$ as
\begin{equation}
M_{2}\left(  f;r\right)  :=\left\{  \frac{1}{2\pi}\int_{0}^{2\pi}\left\vert
f\left(  re^{i\varphi}\right)  \right\vert ^{2}d\varphi\right\}  ^{1/2}.
\label{M2}%
\end{equation}

\begin{defn}
The Hardy space $H^{2}\left(  \mathbb{A}_{a,b}\right)  $ is defined as the
space of all analytic functions $f$ in $\mathbb{A}_{a,b}$ by the condition
\begin{equation}
\sup_{a<r<b}M_{2}\left(  f;r\right)  <\infty\label{HardyAnnulusClassical}%
\end{equation}

\end{defn}

\begin{rem}
Recall that by a theorem of Hardy and Riesz the function \linebreak\ $\log
M_{2}\left(  f;r\right)  $ is a convex function of $\log r,$ hence its maximum
is attained either for $r\rightarrow a+$ or for $r\rightarrow b-$ ; cf.
\cite{sarason}, p. $4.$
\end{rem}

The following Theorem contains the basic properties of the space
\linebreak\ $H^{2}\left(  \mathbb{A}_{a,b}\right)  ,$ see \cite{sarason}, p.
$7-9.$

\begin{thm}
\label{TSarason} Every $f\in H^{2}\left(  \mathbb{A}_{a,b}\right)  $ is
decomposed in $f=g+h$ where $g\in H^{2}\left(  \mathbb{D}_{b}\right)  $ and
$h\in H^{2}\left(  \mathbb{C}\setminus\overline{\mathbb{D}_{a}}\right)  ,$
where $\mathbb{D}_{r}$ denotes the disc $\left\{  z\in\mathbb{C}:\left\vert
z\right\vert <r\right\}  .$ (Hence, the functions $g$ and $h$ provide the
limiting properties to $f$ )

Let $f\in H^{2}\left(  \mathbb{A}_{a,b}\right)  $ and $f^{\ast}$ be its
limiting function on $\partial\mathbb{A}_{a,b}$ which we denote by
$f_{a}^{\ast}$ on the circle $a\times\mathbb{S}^{1}\subset\mathbb{C},$ and by
$f_{b}^{\ast}$ on the circle $b\times\mathbb{S}^{1}\subset\mathbb{C}.$

(i) If the \textbf{Laurent expansion} of $f$ is given by
\begin{equation}
f\left(  z\right)  =%
{\displaystyle\sum_{j=-\infty}^{\infty}}
c_{j}z^{j} \label{fzsumcj}%
\end{equation}
then $f_{a}^{\ast},f_{b}^{\ast}\in L_{2}\left(  \mathbb{S}^{1}\right)  $ and
have Fourier series respectively
\begin{equation}%
{\displaystyle\sum_{j}}
c_{j}a^{j}e^{ij\varphi},\qquad%
{\displaystyle\sum_{j}}
c_{j}b^{j}e^{ij\varphi}. \label{fafb}%
\end{equation}
Vice versa, to every pair of functions $f_{a}^{\ast},f_{b}^{\ast}\in
L_{2}\left(  \mathbb{S}^{1}\right)  $ having Fourier series (\ref{fafb}) there
corresponds uniquely an $f\in H^{2}\left(  \mathbb{A}_{a,b}\right)  $ given by
(\ref{fzsumcj}).

(ii) The following \textbf{Cauchy formula} holds:\
\[
f\left(  z\right)  =\frac{1}{2\pi}%
{\displaystyle\int_{0}^{2\pi}}
f_{b}^{\ast}\left(  be^{i\varphi}\right)  \frac{be^{i\varphi}}{\left(
be^{i\varphi}-z\right)  }d\varphi-\frac{1}{2\pi}%
{\displaystyle\int_{0}^{2\pi}}
f_{a}^{\ast}\left(  ae^{i\varphi}\right)  \frac{ae^{i\varphi}}{\left(
ae^{i\varphi}-z\right)  }d\varphi
\]

(iii) The following \textbf{Maximum principle} holds:%
\[
\left\vert f\left(  z\right)  \right\vert \leq\frac{2\left(  \left\Vert
f_{a}^{\ast}\right\Vert _{L_{2}\left(  \mathbb{S}^{1}\right)  }+\left\Vert
f_{b}^{\ast}\right\Vert _{L_{2}\left(  \mathbb{S}^{1}\right)  }\right)  }%
{\min\left(  1-\left\vert z\right\vert /b,\left\vert z\right\vert /a-1\right)
}\qquad\text{for all }a<\left\vert z\right\vert <b.
\]

\end{thm}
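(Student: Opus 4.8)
The plan is to prove the three assertions in sequence, since each builds naturally on the Laurent expansion of $f$. For a function $f$ analytic in the annulus $\mathbb{A}_{a,b}$, the Laurent expansion $f(z)=\sum_{j=-\infty}^{\infty}c_{j}z^{j}$ converges on compact subsets, and splitting it into the nonnegative-index part and the negative-index part gives the decomposition $f=g+h$. Here $g(z)=\sum_{j\geq 0}c_{j}z^{j}$ and $h(z)=\sum_{j<0}c_{j}z^{j}$. The first step is to verify that the $H^{2}$ bound on the annulus forces $g\in H^{2}(\mathbb{D}_{b})$ and $h\in H^{2}(\mathbb{C}\setminus\overline{\mathbb{D}_{a}})$. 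This follows from Parseval on each circle of radius $r$: one has
\begin{equation}
M_{2}(f;r)^{2}=\sum_{j=-\infty}^{\infty}|c_{j}|^{2}r^{2j}, \label{eqparseval}
\end{equation}
and the hypothesis $\sup_{a<r<b}M_{2}(f;r)<\infty$ makes both the series $\sum_{j\geq 0}|c_{j}|^{2}r^{2j}$ (as $r\to b-$) and $\sum_{j<0}|c_{j}|^{2}r^{2j}$ (as $r\to a+$) bounded, which is exactly the $H^{2}$ condition for $g$ and $h$ on their respective domains. From the classical $H^{2}$ theory on a disc, $g$ then has radial boundary values $g^{\ast}$ on $b\,\mathbb{S}^{1}$ in $L_{2}$ with Fourier coefficients $c_{j}b^{j}$ ($j\geq 0$), and similarly $h$ has boundary values on $a\,\mathbb{S}^{1}$. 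Assembling these gives the claimed Fourier series for $f_{a}^{\ast}$ and $f_{b}^{\ast}$ in part (i); the converse direction is immediate since prescribing $f_{a}^{\ast},f_{b}^{\ast}$ prescribes the $c_{j}$, and one checks the resulting Laurent series converges in $\mathbb{A}_{a,b}$ and has bounded $M_{2}$.

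For part (ii), the Cauchy formula, I would apply the classical Cauchy integral representation to $g$ and $h$ separately. For $g\in H^{2}(\mathbb{D}_{b})$, the standard Cauchy formula on the disc of radius $b$, written with boundary values, reproduces $g(z)$ for $|z|<b$ as
\[
g(z)=\frac{1}{2\pi}\int_{0}^{2\pi}f_{b}^{\ast}(be^{i\varphi})\,\frac{be^{i\varphi}}{be^{i\varphi}-z}\,d\varphi,
\]
recalling that the $g$-part of $f$ carries the nonnegative Fourier modes of $f_{b}^{\ast}$ while the negative modes integrate to zero against this kernel. Dually, for $h$ one uses the Cauchy formula on the exterior domain, and the orientation of the inner boundary produces the minus sign and the kernel $\frac{ae^{i\varphi}}{ae^{i\varphi}-z}$. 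Adding the two representations yields the stated formula. The cleanest verification is to expand the Cauchy kernel as a geometric series in $z/(be^{i\varphi})$ (respectively $(ae^{i\varphi})/z$) and match coefficients against (\ref{fafb}), confirming both integrals select the correct halves of the Laurent series.

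For part (iii), the maximum principle, I would estimate the two Cauchy integrals directly using the Cauchy--Schwarz inequality. Bounding the first integral, one has $|g(z)|\leq \frac{1}{2\pi}\int_{0}^{2\pi}|f_{b}^{\ast}|\,\frac{b}{|be^{i\varphi}-z|}\,d\varphi$; estimating the kernel crudely by $|be^{i\varphi}-z|\geq b-|z|$ and applying Cauchy--Schwarz against the constant function gives a bound proportional to $\|f_{b}^{\ast}\|_{L_{2}(\mathbb{S}^{1})}/(1-|z|/b)$. The analogous estimate for $h$ yields $\|f_{a}^{\ast}\|_{L_{2}(\mathbb{S}^{1})}/(|z|/a-1)$, and summing produces the stated bound once the two denominators are replaced by their minimum. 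The main obstacle is chiefly bookkeeping: one must be careful that the Cauchy--Schwarz step and the geometric-series coefficient matching in part (ii) are compatible with the decomposition in part (i), i.e. that the nonnegative modes genuinely belong to $g$ and the negative modes to $h$, so that the kernels do not pick up spurious contributions. Apart from this care with which boundary carries which modes, the argument is a routine transcription of the single-disc $H^{2}$ theory to the annular setting, and the convexity remark preceding the theorem guarantees the suprema controlling the norms are attained on the boundary circles.
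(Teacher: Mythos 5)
Your proposal is correct, but note that the paper itself offers no proof of this theorem: it is quoted as background material with a citation to Sarason's memoir (\cite{sarason}, pp.~7--9), so there is nothing in the paper to compare against line by line. Your argument is the standard one for $p=2$ and is essentially what the cited source does in this Hilbert-space case: split the Laurent series into $g(z)=\sum_{j\ge 0}c_j z^j$ and $h(z)=\sum_{j<0}c_j z^j$, use Parseval's identity $M_2(f;r)^2=\sum_j |c_j|^2 r^{2j}$ together with monotonicity of each half-series in $r$ to get $g\in H^2(\mathbb{D}_b)$ and $h\in H^2(\mathbb{C}\setminus\overline{\mathbb{D}_a})$, then read off boundary values, the Cauchy formula, and the maximum principle. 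Two small points deserve to be made explicit. First, in "assembling" the boundary values in (i), on the circle $|z|=b$ the function $f_b^\ast$ is the $L_2$ boundary limit of $g$ plus the restriction of $h$, which is analytic across that circle (and symmetrically on $|z|=a$); this is exactly why all integer modes $c_j b^j$, $j\in\mathbb{Z}$, appear, not only the nonnegative ones, and your coefficient-matching in (ii) depends on this. Second, your kernel computation in (ii) is right: expanding $\frac{be^{i\varphi}}{be^{i\varphi}-z}=\sum_{n\ge0}(z/b)^n e^{-in\varphi}$ annihilates the negative modes of $f_b^\ast$ and reproduces $g$, while $\frac{ae^{i\varphi}}{ae^{i\varphi}-z}=-\sum_{n\ge1}(a/z)^n e^{in\varphi}$ for $|z|>a$, so the minus sign in the stated formula makes that term reproduce $h$; the interchange of sum and integral is justified by uniform convergence of these geometric series for fixed $z$ in the open annulus. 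Your Cauchy--Schwarz estimate in (iii) actually yields the sharper bound without the factor $2$, which of course implies the stated inequality.
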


\begin{rem}
\label{RinnerProduct}The space $H^{2}\left(  \mathbb{A}_{a,b}\right)  $ is a
Hilbert space with the inner product
\begin{multline}
\left\Vert f\right\Vert _{H^{2}\left(  \mathbb{A}_{a,b}\right)  }%
^{2}=\left\Vert f_{a}^{\ast}\right\Vert _{L_{2}}^{2}+\left\Vert f_{b}^{\ast
}\right\Vert _{L_{2}}^{2}\label{innerProductHannulus}\\
=\lim_{r\longrightarrow a}\frac{1}{2\pi}%
{\displaystyle\int_{0}^{2\pi}}
f\left(  re^{i\varphi}\right)  \overline{g\left(  re^{i\varphi}\right)
}d\varphi+\lim_{r\longrightarrow b}\frac{1}{2\pi}%
{\displaystyle\int_{0}^{2\pi}}
f\left(  re^{i\varphi}\right)  \overline{g\left(  re^{i\varphi}\right)
}d\varphi\nonumber
\end{multline}
cf. \cite{sarason}. Let $f\in H^{2}\left(  \mathbb{A}_{a,b}\right)  $ and have
a Laurent expansion
\[
f\left(  z\right)  =%
{\displaystyle\sum_{j=-\infty}^{\infty}}
c_{j}z^{j}.
\]
Then
\[
\left\Vert f\right\Vert _{H^{2}\left(  \mathbb{A}_{a,b}\right)  }^{2}=%
{\displaystyle\sum_{j=-\infty}^{\infty}}
\left\vert c_{j}\right\vert ^{2}\left\Vert z^{j}\right\Vert _{L_{2}\left(
\partial\mathbb{A}_{a,b}\right)  }^{2}=2\pi%
{\displaystyle\sum_{j=-\infty}^{\infty}}
\left\vert c_{j}\right\vert ^{2}\left(  a^{2j}+b^{2j}\right)  .
\]

\end{rem}

\begin{rem}
In \cite{bergman} S. Bergman computes the kernel for the annulus, cf. also
\cite{krantzGeometricFunction} and \cite{MCCULLOUGH}.
\end{rem}

\begin{rem}
\label{PequivalentNorms} The following follows directly from above: For every
function $f\in\mathcal{H}\left(  \mathbb{A}_{a,b}\right)  $ we have the
inequalities
\[
\left\Vert f\right\Vert _{H^{2}\left(  \mathbb{A}_{a,b}\right)  }\leq
2\sup_{a<r<b}M_{2}\left(  f;r\right)  \leq2\left\Vert f\right\Vert
_{H^{2}\left(  \mathbb{A}_{a,b}\right)  }.
\]

\end{rem}

\section{The component one-dimensional spaces $H^{2,k}$ \label{ScomponentH}}

For us important role will be played by the so-called \textquotedblright
component Hardy spaces\textquotedblright\ $H^{2,k}\left(  \mathbb{A}%
_{a,b}\right)  $ which are relatively simple subspaces of the Hardy space
$H^{2}\left(  \mathbb{A}_{a,b}\right)  ,$ cf. some classical references on
Hardy spaces as \cite{hoffmann}, \cite{rudin}.

\begin{defn}
\label{Dakanalytic} The closure of the linear space of analytic functions
$V_{k,d}$ defined in (\ref{Vkn}), in the Hardy space $H^{2}\left(
\mathbb{A}_{a,b}\right)  $ will be denoted by
\begin{equation}
H^{2,k}=H^{2,k}\left(  \mathbb{A}_{a,b}\right)  \label{H2kAnnulus}%
\end{equation}
and will be called \textbf{component Hardy spaces in the annulus }%
$\mathbb{A}_{a,b}.$
\end{defn}

The properties of the space $H^{2,k}\left(  \mathbb{A}_{a,b}\right)  $ easily
follow from those of the usual Hardy spaces $H^{2}\left(  \mathbb{A}%
_{a,b}\right)  $ and $H^{2}\left(  \mathbb{D}_{b}\right)  .$

\begin{prop}
\label{PH2k} Let $f\in H^{2,k}\left(  \mathbb{A}_{a,b}\right)  .$

1. Then $z^{d+k-2}f\left(  z\right)  \in H^{2}\left(  \mathbb{D}_{b}\right)
.$

2. There exist two uniquely determined functions $f_{1},f_{2}\in H^{2}\left(
\mathbb{D}_{b}\right)  $ such that
\begin{equation}
f\left(  z\right)  =z^{k}f_{1}\left(  z^{2}\right)  +z^{-d-k+2}f_{2}\left(
z^{2}\right)  . \label{ff1f2}%
\end{equation}

3. Let $f^{\ast}$ (having two parts, $f_{a}^{\ast}$ and $f_{b}^{\ast}$ ) be
the limiting function for $f$ on $\partial\mathbb{A}_{a,b}.$ Then the
following expansions hold,
\begin{equation}
f_{a}^{\ast}\left(  ae^{i\varphi}\right)  =\sum_{j\in\mathcal{R}}c_{j}%
a^{j}e^{ij\varphi},\qquad f_{b}^{\ast}\left(  be^{i\varphi}\right)
=\sum_{j\in\mathcal{R}}c_{j}b^{j}e^{ij\varphi},
\label{RieszTypeConditionsAnnulus}%
\end{equation}
where the set $\mathcal{R}$ is defined in (\ref{RieszSetAnnulus}). Vice versa,
for every two functions $f_{a}^{\ast}\left(  ae^{i\varphi}\right)  $ and
$f_{b}^{\ast}\left(  be^{i\varphi}\right)  $ with the above expansions there
exists a function $f\in H^{2,k}\left(  \mathbb{A}_{a,b}\right)  $ having
Laurent expansion (\ref{fzsumcj}) for which $f^{\ast}$ is a limiting one.

4. There exist constants $A,B>0$ such that for every $f\in H^{2,k}\left(
\mathbb{A}_{a,b}\right)  $ holds
\[
A\left\Vert f\right\Vert _{H^{2}\left(  \mathbb{A}_{a,b}\right)  }%
\leq\left\Vert f_{1}\right\Vert _{H^{2}\left(  \mathbb{D}_{b}\right)
}+\left\Vert f_{2}\right\Vert _{H^{2}\left(  \mathbb{D}_{b}\right)  }\leq
B\left\Vert f\right\Vert _{H^{2}\left(  \mathbb{A}_{a,b}\right)  };
\]
cf. \cite{sarason}, p. $5.$
\end{prop}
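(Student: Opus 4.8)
The four assertions of Proposition~\ref{PH2k} all reduce to a single structural observation: a function $f\in H^{2,k}(\mathbb{A}_{a,b})$ is, by Definition~\ref{Dakanalytic}, a limit in $H^{2}(\mathbb{A}_{a,b})$ of finite sums from $V_{k,d}$, and by the norm formula in Remark~\ref{RinnerProduct} such convergence is precisely $\ell_{2}$-convergence of the Laurent coefficients against the weights $a^{2j}+b^{2j}$. The whole proposition is therefore about which Laurent exponents $j$ can occur: only those in the Riesz-type set $\mathcal{R}=\mathcal{R}_{k}$ of \eqref{RieszSetAnnulus}, i.e. $j=k+2m$ or $j=-d-k+2+2m$ with $m\geq 0$. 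My plan is to establish this ``spectral support'' once and then read off all four parts.

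\textbf{Parts 1 and 3 (the spectral support).} First I would show that every element of $V_{k,d}$ has a Laurent expansion supported on $\mathcal{R}_{k}$: this is immediate from the defining spanning set $\{z^{2j+k}\}\cup\{z^{-d-k+2+2j}\}$ in \eqref{Vkn}. Since passing to an $H^{2}(\mathbb{A}_{a,b})$-limit, by Remark~\ref{RinnerProduct}, means $\ell_{2}$-convergence of coefficients and hence coefficientwise convergence, the support condition $c_{j}=0$ for $j\notin\mathcal{R}_{k}$ survives in the limit; so every $f\in H^{2,k}(\mathbb{A}_{a,b})$ has Laurent expansion $f(z)=\sum_{j\in\mathcal{R}_{k}}c_{j}z^{j}$. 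This gives the expansions \eqref{RieszTypeConditionsAnnulus} directly from Theorem~\ref{TSarason}(i), and the converse in part~3 follows because any pair $f_{a}^{\ast},f_{b}^{\ast}$ with those Fourier supports determines, via Theorem~\ref{TSarason}(i), an $f\in H^{2}(\mathbb{A}_{a,b})$ whose coefficients are supported on $\mathcal{R}_{k}$, and such an $f$ is approximable by truncations lying in $V_{k,d}$, hence lies in the closure $H^{2,k}$. For part~1, note that the smallest exponent appearing in $\mathcal{R}_{k}$ is $-d-k+2$; multiplying by $z^{d+k-2}$ shifts every exponent to be $\geq 0$, so $z^{d+k-2}f(z)$ has only nonnegative Laurent coefficients and the mean-value bound \eqref{HardyAnnulusClassical} on $\mathbb{A}_{a,b}$ transfers to a bound over the full disc, placing it in $H^{2}(\mathbb{D}_{b})$.

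\textbf{Part 2 (the splitting).} Using the spectral support I would split the index set $\mathcal{R}_{k}$ into its two arithmetic progressions and collect terms: writing $j=k+2m$ gives $\sum_{m\geq 0}c_{k+2m}z^{k+2m}=z^{k}f_{1}(z^{2})$ with $f_{1}(w)=\sum_{m\geq 0}c_{k+2m}w^{m}$, and writing $j=-d-k+2+2m$ gives $z^{-d-k+2}f_{2}(z^{2})$ with $f_{2}(w)=\sum_{m\geq 0}d_{m}w^{m}$. The $\ell_{2}$-summability of the coefficients of $f$ translates into $f_{1},f_{2}\in H^{2}(\mathbb{D}_{b})$ (the substitution $z\mapsto z^{2}$ only relabels the radii and rescales the norm). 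Uniqueness is forced by the fact that the two progressions $\{k+2m\}$ and $\{-d-k+2+2m\}$ are disjoint in $\mathbb{Z}$ — here I would use that $d$ is taken odd, so $k+2m$ and $-d-k+2+2m'$ have opposite parities and can never coincide; disjointness of supports means the decomposition of the Laurent coefficients is unique, hence $f_{1},f_{2}$ are unique.

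\textbf{Part 4 (norm equivalence) and the main obstacle.} The equivalence of norms is the only quantitative step. Having the explicit coefficient description, I would compute $\|f\|_{H^{2}(\mathbb{A}_{a,b})}^{2}=2\pi\sum_{j\in\mathcal{R}_{k}}|c_{j}|^{2}(a^{2j}+b^{2j})$ from Remark~\ref{RinnerProduct} and compare it with $\|f_{1}\|_{H^{2}(\mathbb{D}_{b})}^{2}+\|f_{2}\|_{H^{2}(\mathbb{D}_{b})}^{2}$, which in coefficient form involves the weights $b^{2m}$ applied to the two subsequences after the $z\mapsto z^{2}$ substitution. The point is that on each progression the weight $a^{2j}+b^{2j}$ is comparable, uniformly in $j$, to the single-endpoint weight used in the disc norm; the essential inequality is that for the ``outer'' progression $b$ dominates and for the ``inner'' (negative-exponent) progression $a$ dominates, so the two-term annulus weight is squeezed between constant multiples of the disc weights. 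The main obstacle is precisely to extract the constants $A,B$ uniformly over all $j\in\mathcal{R}_{k}$ \emph{and} independently of $k$: one must check that the ratios $(a^{2j}+b^{2j})/b^{2j}$ (for the $f_{1}$-part) and the corresponding ratio controlling the $f_{2}$-part stay bounded above and below by constants depending only on $a,b,d$. This is where the geometry $a<b$ enters decisively, and it is the one place where a genuine (if elementary) estimate, rather than a bookkeeping argument, is required; the reference to \cite{sarason}, p.~$5$ suggests the one-dimensional annulus-versus-disc comparison is the template to follow.
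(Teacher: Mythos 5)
The paper gives no proof of this proposition at all: it merely remarks that the properties ``easily follow'' from those of $H^{2}\left(\mathbb{A}_{a,b}\right)$ and $H^{2}\left(\mathbb{D}_{b}\right)$, with a pointer to Sarason. Your coefficient-support argument is the natural way to supply the missing details, and its core is correct: elements of $V_{k,d}$ have Laurent spectrum in $\mathcal{R}_{k}$; convergence in $H^{2}\left(\mathbb{A}_{a,b}\right)$ is weighted $\ell_{2}$-convergence of Laurent coefficients, so the spectral condition survives passage to the closure; truncation gives the converse half of part 3; and the exponent shift by $d+k-2$ settles part 1. Your observation that uniqueness in part 2 rests on disjointness of the two progressions, which holds because $d$ is odd (the exponents then have opposite parities), is exactly right and is precisely the detail the paper leaves implicit.

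Two things need repair. (i) The disc radius in parts 2 and 4: finiteness of the annulus norm gives $\sum_{m}\left\vert c_{k+2m}\right\vert^{2}b^{4m}<\infty$, i.e.\ $f_{1}\left(w\right)=\sum_{m}c_{k+2m}w^{m}\in H^{2}\left(\mathbb{D}_{b^{2}}\right)$, not $f_{1}\in H^{2}\left(\mathbb{D}_{b}\right)$. For $b<1$ these genuinely differ: with $c_{k+2m}=b^{-2m}/\left(m+1\right)$ the function $f$ lies in $H^{2,k}\left(\mathbb{A}_{a,b}\right)$, yet $\sum_{m}\left\vert c_{k+2m}\right\vert^{2}b^{2m}=\infty$. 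So ``relabels the radii and rescales the norm'' conceals a real discrepancy; it is inherited from the paper's own wording (Sarason normalizes the outer radius to $1$, where $\mathbb{D}_{b}$ and $\mathbb{D}_{b^{2}}$ coincide), and a clean proof should state the decomposition with $\mathbb{D}_{b^{2}}$, or rescale. (ii) In part 4, the ``main obstacle'' you identify --- constants uniform in $k$ --- is neither required (the statement is for a fixed $k$) nor true: for $f\left(z\right)=z^{k}$ one has $\left\Vert f\right\Vert_{H^{2}\left(\mathbb{A}_{a,b}\right)}^{2}=2\pi\left(a^{2k}+b^{2k}\right)$ while $f_{1}\equiv1$ and $f_{2}\equiv0$, so any two-sided comparison must absorb the factor $b^{k}$. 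The ratio you propose to bound, $\left(a^{2j}+b^{2j}\right)/b^{2j}\in\left[1,2\right]$ for $j\geq0$, is indeed uniformly bounded, but it is not the quantity that controls the comparison: what matters are the prefactors $b^{2k}$ (first progression) and $a^{-2\left(d+k-2\right)}$, $b^{-2\left(d+k-2\right)}$ (second progression) relating the annulus weights to the disc weight $b^{4m}$. Relatedly, your claim that ``$a$ dominates'' on the inner progression holds only for its finitely many negative exponents; for large $m$ the exponent $j=-d-k+2+2m$ is positive and $b^{2j}$ dominates. The correct fixed-$k$ argument is simpler than what you envisage: pull out these prefactors, use $a^{4m}\leq b^{4m}$, and compare each block of the annulus norm to $\sum_{m}\left\vert\cdot\right\vert^{2}b^{4m}$; this yields part 4 with constants depending on $k,a,b,d$, which is all the statement asks.
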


\section{Polyharmonic Hardy space $H_{L}^{2}\left(  \mathcal{A}_{a,b}\right)
$ on the complexified annulus \label{SHK}}

\subsection{A motivation}

Now we may define the polyharmonic Hardy space $H_{L}^{2}\left(
\mathcal{A}_{a,b}\right)  $ on the complexified annulus $\mathcal{A}_{a,b}$ of
$\mathbb{R}^{d}.$ The most important property of this space will be a Maximum
principle, and a related Cauchy type kernel.

Let us explain the problem of finding a proper norm to define the space
$H_{L}^{2}\left(  \mathcal{A}_{a,b}\right)  .$ The elements of the space
$\mathcal{V},$ which is generating $H_{L}^{2},$ are functions $f\left(
z,\theta\right)  $ having finite expansion
\begin{equation}
f\left(  z,\theta\right)  =%
{\displaystyle\sum_{k,\ell}}
f_{k,\ell}\left(  z\right)  Y_{k,\ell}\left(  \theta\right)  =%
{\displaystyle\sum_{k,\ell}}
\left(  \widetilde{f}_{k,\ell}\left(  z^{2}\right)  z^{k}+\widetilde
{\widetilde{f}}_{k,\ell}\left(  z^{2}\right)  z^{-d-k+2}\right)  Y_{k,\ell
}\left(  \theta\right)  . \label{fztheta}%
\end{equation}
Since all components of this sum are mutually orthogonal in $\theta
\in\mathbb{S}^{d-1},$ it would be natural to consider as a candidate the norm
\[
\left\Vert f\right\Vert _{\ast}^{2}:=%
{\displaystyle\sum_{k,\ell}}
\left\Vert f_{k,\ell}\right\Vert _{H^{2,k}}^{2}.
\]
However it is easy to see that such a norm does not guarantee that a Cauchy
sequence of elements $f_{N}\left(  z\theta\right)  ,$ $N\geq1,$ will converge
uniformly on compacts of $\mathcal{A}_{a,b}.$ Indeed, let us consider the
following sequence,
\begin{equation}
u_{N}\left(  z,\theta\right)  =z^{-d-k_{N}+2+2N}\frac{Y_{k_{N},0}\left(
\theta\right)  }{\sqrt[3]{k_{N}}}=\frac{Y_{k_{N},0}\left(  \theta\right)
}{\sqrt[3]{k_{N}}} \label{uN}%
\end{equation}
with $k_{N}=-d+2+2N,$ for $N\geq d.$ Obviously, $u_{N}\in\mathcal{V}$ and we
have
\[
\left\Vert u_{N}\right\Vert _{\ast}=\frac{1}{\sqrt[3]{k_{N}}}\longrightarrow
0\qquad\text{for }N\longrightarrow\infty.
\]
However, as we see below this sequence is even unbounded (for $d\geq3$ ).

\begin{prop}
For $d\geq3,$ the sequence of functions $u_{N}$ defined in (\ref{uN}) is unbounded.
\end{prop}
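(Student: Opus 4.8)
The plan is first to notice that the sequence $u_N$ is in fact independent of $z$. With $k_N=-d+2+2N$ the exponent of $z$ collapses, $-d-k_N+2+2N=-d-\left(-d+2+2N\right)+2+2N=0$, so that $z^{-d-k_N+2+2N}=1$ and $u_N\left(z,\theta\right)=Y_{k_N,0}\left(\theta\right)/\sqrt[3]{k_N}$, exactly as already recorded in (\ref{uN}). Consequently the uniform size of $u_N$ over the complexified annulus $\mathcal{A}_{a,b}$ is governed entirely by the spherical harmonic $Y_{k_N,0}$ on $\mathbb{S}^{d-1}$:
\[
\sup_{\left(z,\theta\right)\in\mathcal{A}_{a,b}}\left\vert u_N\left(z,\theta\right)\right\vert=\frac{1}{\sqrt[3]{k_N}}\left\Vert Y_{k_N,0}\right\Vert_{L_\infty\left(\mathbb{S}^{d-1}\right)}.
\]
To prove that $\left(u_N\right)$ is unbounded it therefore suffices to show that this quantity tends to $\infty$, i.e. that the sup-norm of the normalized zonal harmonic grows faster than $k^{1/3}$.

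The decisive step is the growth rate of $\left\Vert Y_{k,0}\right\Vert_{L_\infty}$, where (as is implicit in (\ref{uN})) I take $Y_{k,0}$ to be the zonal harmonic with a fixed pole $\eta$, normalized in $L_2\left(\mathbb{S}^{d-1}\right)$. Writing $\omega:=\int_{\mathbb{S}^{d-1}}d\theta$ and $a_k=\dim\mathcal{H}_k\left(\mathbb{R}^d\right)$ as in (\ref{eqdim}), the addition theorem for spherical harmonics gives $\sum_{\ell=1}^{a_k}\left\vert Y_{k,\ell}\left(\theta\right)\right\vert^2=a_k/\omega$ for every $\theta$, and the associated zonal kernel $Z_\eta=\sum_\ell\overline{Y_{k,\ell}\left(\eta\right)}Y_{k,\ell}$ satisfies $Z_\eta\left(\eta\right)=\left\Vert Z_\eta\right\Vert_{L_2}^2=a_k/\omega$. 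Normalizing, $Y_{k,0}=Z_\eta/\left\Vert Z_\eta\right\Vert_{L_2}$ attains at its pole the value $Y_{k,0}\left(\eta\right)=\sqrt{a_k/\omega}$, whence
\[
\left\Vert Y_{k,0}\right\Vert_{L_\infty\left(\mathbb{S}^{d-1}\right)}\ge\sqrt{a_k/\omega}.
\]

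It then remains to insert the elementary dimension count. For fixed $d$ the number $a_k$ is a polynomial in $k$ of degree $d-2$ with positive leading coefficient, so $a_k\ge c_d\,k^{d-2}$ for all large $k$ and some $c_d>0$. Since $k_N=2N-d+2\to\infty$, combining the displays gives a constant $C>0$ with
\[
\sup_{\left(z,\theta\right)}\left\vert u_N\left(z,\theta\right)\right\vert\ge\frac{\sqrt{a_{k_N}/\omega}}{\sqrt[3]{k_N}}\ge C\,k_N^{\frac{d-2}{2}-\frac13}.
\]
For $d\ge3$ the exponent satisfies $\frac{d-2}{2}-\frac13\ge\frac12-\frac13=\frac16>0$, so the right-hand side diverges and the sequence is unbounded.

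The routine parts here are the exponent cancellation and the polynomial asymptotics of $a_k$; the step carrying the real content is the lower bound $\left\Vert Y_{k,0}\right\Vert_{L_\infty}\gtrsim\sqrt{a_k}\sim k^{(d-2)/2}$, which expresses that an $L_2$-normalized zonal harmonic concentrates at its pole with amplitude of order $k^{(d-2)/2}$. This is also exactly where the hypothesis $d\ge3$ enters: for $d=2$ one has $a_k$ bounded, the sup-norm does \emph{not} grow, and the cube-root normalization forces $\left\Vert u_N\right\Vert_\infty\to0$, so the proposition genuinely requires $d\ge3$.
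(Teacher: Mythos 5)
Your proof is correct, and it takes a genuinely different (and more complete) route than the paper's. The paper's own proof, after recalling the upper bound $\left\vert Y_{k,\ell}\left(\theta\right)\right\vert \leq Ck^{\frac{d}{2}-1}$, treats only the case $d=3$: it writes the zonal harmonic explicitly as $Y_{k,1}\left(\theta\right)=\sqrt{\left(2k+1\right)/\left(4\pi\right)}\,P_{k}\left(\cos\vartheta\right)$ with $P_{k}$ the Legendre polynomial, and uses $P_{k}\left(1\right)=1$ to conclude unboundedness at $\vartheta=0$; the cases $d>3$ are left implicit. You reach the same key fact --- that the $L_{2}$-normalized zonal harmonic attains the value $\sqrt{a_{k}/\omega}\sim k^{\left(d-2\right)/2}$ at its pole, which beats the $k^{1/3}$ damping --- but you derive it from the addition theorem rather than from an explicit formula, so your argument works uniformly in all dimensions $d\geq3$ and both proofs evaluate at the same point (the pole of the zonal harmonic, i.e. $\vartheta=0$ in the paper's coordinates). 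What your approach buys is full generality in $d$, an intrinsic explanation of the exponent $\frac{d-2}{2}-\frac{1}{3}>0$, and a clean account of why $d\geq3$ is necessary (for $d=2$ the dimension $a_{k}$ stays bounded and $u_{N}$ in fact tends to zero, matching the paper's parenthetical restriction); what the paper's approach buys is brevity and concreteness, at the price of covering only $d=3$. Both rely on the same underlying phenomenon, concentration of the zonal harmonic at its pole, and your reading of $Y_{k_{N},0}$ as that zonal harmonic agrees with the choice the paper makes in its $d=3$ computation.
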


\begin{proof}
First of all, recall that for every spherical harmonic $Y_{k,\ell}\left(
\theta\right)  $ of degree $k\geq0$ (see (\ref{Ykl})) we have the estimate
\begin{equation}
\left\vert Y_{k,\ell}\left(  \theta\right)  \right\vert \leq Ck^{\frac{d}%
{2}-1}\qquad\text{for all }\theta\in\mathbb{S}^{d-1}.\label{sphericalEstimate}%
\end{equation}
(cf. \cite{seeley}). Let us consider $d=3.$ We have the following
representation of the spherical harmonic which corresponds to the Legendre
polynomial $P_{k}\left(  \cdot\right)  $, (cf. \cite{tikhonovSamarskii},
Appendix $2,$ part II ):
\[
Y_{k,1}\left(  \theta\right)  =\sqrt{\frac{2k+1}{4\pi}}P_{k}\left(
\cos\vartheta\right)  \qquad\text{for }0\leq\vartheta\leq\pi,\ 0\leq
\varphi\leq2\pi;
\]
here we have $\theta=\left(  \cos\varphi\sin\vartheta,\sin\varphi\sin
\vartheta,\cos\vartheta\right)  \in\mathbb{S}^{2},$ and the usual Legendre
polynomials $P_{k}\left(  \cdot\right)  $. Since $P_{k}\left(  1\right)  =1,$
it follows that the sequence $u_{N}\left(  z,\theta\right)  $ is unbounded at
$\vartheta=0.$
\end{proof}

\begin{rem}
An additional observation about the unwished behavior of the sequence
$u_{N}\left(  z,\theta\right)  $, which is easy to see for $d=2$ (where
$Y_{k,1}=\cos k\varphi$ and $Y_{k,2}=\sin k\varphi$) is that derivatives in
$\theta$ of $u_{N}\left(  z,\theta\right)  $ contain non-negative powers of
$k.$
\end{rem}

For that reason a proper norm has to "tame" the behavior of such sequences
including their derivatives: we will introduce an inner product and norm which
contain weight factors for every component $k$, which \textquotedblright
suppress\textquotedblright\ such unwished behavior and this will guarantee
convergence of Cauchy sequences in $H_{L}^{2}\left(  \mathcal{A}_{a,b}\right)
$ on compact subsets of $\mathcal{A}_{a,b}.$

\begin{rem}
In this context let us remind that a function $f\left(  \theta\right)  $ on
$\mathbb{S}^{d-1}$ is real analytic on $\mathbb{S}^{d-1}$ if its
Laplace-Fourier expansion $f\left(  \theta\right)  =%
{\displaystyle\sum_{k=0}^{\infty}}
Y_{k}\left(  \theta\right)  $ (where $Y_{k}\left(  \theta\right)  =%
{\displaystyle\sum_{\ell=1}^{a_{k}}}
f_{k,\ell}Y_{k,\ell}\left(  \theta\right)  $ ) satisfies
\[
\left\Vert Y_{k}\left(  \theta\right)  \right\Vert <Ce^{-\eta k}%
\qquad\text{for }k\geq1
\]
for some constants $C>0$ and $\eta>0,$ cf. \cite{sobolev}.
\end{rem}

\subsection{The weighted inner product and norm}

Let us introduce the weight factor to be an arbitrary number
\[
L>1.
\]

\begin{defn}
Let $f:=\left(  f_{k,\ell}\right)  _{k,\ell}$ and $g:=\left(  g_{k,\ell
}\right)  _{k,\ell}$ be two sequences of functions with   $f_{k,\ell}$ $\in
H^{2,k}\left(  \mathbb{A}_{a,b}\right)  $ and $g_{k,\ell}\in H^{2,k}\left(  \mathbb{A}_{a,b} \right)  $ for all
$k=0,1,...,$and $\ell=1,..,a_{k}.$ We define the following inner product
\begin{equation}
\left\langle f,g\right\rangle _{H_{L}^{2}}:=\sum_{k,\ell}\left\langle
f_{k,\ell},g_{k,\ell}\right\rangle _{H^{2}\left(  \mathbb{A}_{a,b}\right)
}L^{2k},\label{scalarHK}%
\end{equation}
where $\left\langle ,\right\rangle _{H^{2}\left(  \mathbb{A}_{a,b}\right)  }$
is the inner product in $H^{2}\left(  \mathbb{A}_{a,b}\right)  .$
\end{defn}

It is easy to see that $\left\langle ,\right\rangle _{H^{2}\left(
\mathbb{A}_{a,b}\right)  }$ satisfies all necessary properties of inner
product on a Hilbert space, and the norm is given by
\begin{align}
\left\Vert f\right\Vert _{H_{L}^{2}}^{2} &  :=\left\Vert f\right\Vert
_{H_{L}^{2}\left(  \mathcal{A}_{a,b}\right)  }^{2}:=\sum_{k,\ell}\left\Vert
f_{k,\ell}\right\Vert _{H^{2}\left(  \mathbb{A}_{a,b}\right)  }^{2}%
L^{2k}\label{HKspace}\\
&  =\left(  \frac{1}{2\pi}\sum_{k,\ell}L^{2k}\int_{0}^{2\pi}\left\vert
f_{k,\ell}^{\ast}\left(  ae^{i\varphi}\right)  \right\vert ^{2}d\varphi
+L^{2k}\int_{0}^{2\pi}\left\vert f_{k,\ell}^{\ast}\left(  be^{i\varphi
}\right)  \right\vert ^{2}d\varphi\right)  ^{1/2}.\nonumber
\end{align}
Let us consider the function
\[
f\left(  z,\theta\right)  =%
{\displaystyle\sum_{k,\ell}}
f_{k,\ell}\left(  z\right)  Y_{k,\ell}\left(  \theta\right)
\]
with $f_{k,\ell}\in H^{2,k}\left(  \mathbb{A}_{a,b}\right)  $ and $\left\Vert
f\right\Vert _{H_{L}^{2}}<\infty.$ By applying Proposition
\ref{PequivalentNorms} we see that for every $r\in\left(  a,b\right)  $ holds
\begin{align*}
&
{\displaystyle\int_{0}^{2\pi}}
{\displaystyle\int_{\mathbb{S}^{d-1}}}
\left\vert
{\displaystyle\sum_{k,\ell}}
f_{k,\ell}\left(  z\right)  Y_{k,\ell}\left(  \theta\right)  \right\vert
^{2}d\varphi d\theta\\
&  =%
{\displaystyle\sum_{k,\ell}}
{\displaystyle\int_{0}^{2\pi}}
\left\vert f_{k,\ell}\left(  re^{i\varphi}\right)  \right\vert ^{2}d\varphi\\
&  \leq\left\Vert f\right\Vert _{H_{L}^{2}}^{2},
\end{align*}
which shows that $f$ is a function in $L_{2}\left(  \mathcal{A}_{a,b}\right)
.$ 

Now we may give the following definition.

\begin{defn}
\label{DHardyAnnulusRn} The polyharmonic Hardy space $H_{L}^{2}\left(
\mathcal{A}_{a,b}\right)  $ consists of all functions
\[
f\left(  z,\theta\right)  =%
{\displaystyle\sum_{k,\ell}}
f_{k,\ell}\left(  z\right)  Y_{k,\ell}\left(  \theta\right)
\]
with $f_{k,\ell}\in H^{2,k}\left(  \mathbb{A}_{a,b}\right)  $ and satisfying
$\left\Vert f\right\Vert _{H_{L}^{2}}<\infty.$
\end{defn}

Let us remark that this definition is completely analogous to the classical
case where one takes the Laurent series of the type $\sum_{j=-\infty}^{\infty
}a_{j}z^{j}$ which are convergent on compacts in the annulus $\mathbb{A}%
_{a,b}.$

It is easy to see that there are equivalent representations of the functions
in the space $H_{L}^{2}\left(  \mathcal{A}_{a,b}\right)  $.

\begin{prop}
\label{PHKspaces} Every element $f\in H_{L}^{2}\left(  \mathcal{A}%
_{a,b}\right)  $ has the following representation as an infinite series (in
$L^{2}-$sense):
\begin{align}
f\left(  z\theta\right)   &  =\sum_{k,\ell}f_{k,\ell}\left(  z\right)
Y_{k,\ell}\left(  \theta\right) \label{HKLaurent}\\
&  =%
{\displaystyle\sum_{k,\ell}}
{\displaystyle\sum_{j\in\mathcal{R}_{k}}}
f_{k,\ell;j}z^{j}Y_{k,\ell}\left(  \theta\right) \nonumber
\end{align}
where for every index $\left(  k,\ell\right)  $ the function $f_{k,\ell
}\left(  z\right)  \in H^{2,k}\left(  \mathbb{A}_{a,b}\right)  ;$ the set
$\mathcal{R}_{k}$ was introduced in (\ref{RieszSetAnnulus}). The norm of $f$
is given by
\begin{align*}
\left\Vert f\right\Vert _{H_{L}^{2}}^{2}  &  =%
{\displaystyle\sum_{k,\ell}}
\left(
{\displaystyle\sum_{j\in\mathcal{R}_{k}}}
\left\vert f_{k,\ell;j}\right\vert ^{2}\left\Vert z^{j}\right\Vert
_{H^{2}\left(  \mathbb{A}_{a,b}\right)  }^{2}\right)  L^{2k}\\
&  =2\pi%
{\displaystyle\sum_{k,\ell}}
\left(
{\displaystyle\sum_{j\in\mathcal{R}_{k}}}
\left\vert f_{k,\ell;j}\right\vert ^{2}\left(  a^{2j}+b^{2j}\right)  \right)
L^{2k}.
\end{align*}

\end{prop}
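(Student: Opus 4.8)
The plan is to leverage the definition of $H_L^2(\mathcal{A}_{a,b})$ together with the structural description of the component spaces $H^{2,k}(\mathbb{A}_{a,b})$ already established in Proposition \ref{PH2k}. The claim has two halves: the series representation \eqref{HKLaurent}, and the explicit evaluation of the norm. Both should follow by reducing to the one-dimensional situation component by component and then summing with the weights $L^{2k}$.

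First I would fix an element $f=\sum_{k,\ell}f_{k,\ell}(z)Y_{k,\ell}(\theta)$ in $H_L^2(\mathcal{A}_{a,b})$, so that by Definition \ref{DHardyAnnulusRn} each $f_{k,\ell}$ lies in $H^{2,k}(\mathbb{A}_{a,b})$ and $\|f\|_{H_L^2}<\infty$. For a single component, since $f_{k,\ell}\in H^{2,k}\subset H^2(\mathbb{A}_{a,b})$, Theorem \ref{TSarason}(i) furnishes a Laurent expansion $f_{k,\ell}(z)=\sum_{j}f_{k,\ell;j}z^{j}$. The key point is that membership in the \emph{component} space $H^{2,k}$ forces the expansion to be supported on the index set $\mathcal{R}_k$ of \eqref{RieszSetAnnulus}: indeed $V_{k,d}$ in \eqref{Vkn} is spanned precisely by monomials $z^{j}$ with $j\in\mathcal{R}_k$, and taking the $H^2(\mathbb{A}_{a,b})$-closure cannot introduce frequencies outside $\mathcal{R}_k$ because the $L^2(\partial\mathbb{A}_{a,b})$-orthogonality of the $z^{j}$ (Remark \ref{RinnerProduct}) makes the coefficients $f_{k,\ell;j}$ continuous linear functionals that vanish on the whole space whenever $j\notin\mathcal{R}_k$. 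This is exactly the content of Proposition \ref{PH2k}, part 3. Substituting these component expansions into the sum over $(k,\ell)$ yields the double series \eqref{HKLaurent}, with convergence in the $L^2(\mathcal{A}_{a,b})$-sense guaranteed by the computation preceding Definition \ref{DHardyAnnulusRn}.

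For the norm, I would start from the defining formula \eqref{HKspace}, namely $\|f\|_{H_L^2}^2=\sum_{k,\ell}\|f_{k,\ell}\|_{H^2(\mathbb{A}_{a,b})}^2 L^{2k}$, and then evaluate each single-component norm by the explicit Parseval-type identity in Remark \ref{RinnerProduct}: for $f_{k,\ell}(z)=\sum_{j\in\mathcal{R}_k}f_{k,\ell;j}z^{j}$ one has $\|f_{k,\ell}\|_{H^2(\mathbb{A}_{a,b})}^2=\sum_{j\in\mathcal{R}_k}|f_{k,\ell;j}|^2\|z^{j}\|_{H^2(\mathbb{A}_{a,b})}^2=2\pi\sum_{j\in\mathcal{R}_k}|f_{k,\ell;j}|^2(a^{2j}+b^{2j})$. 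Inserting this into \eqref{HKspace} and carrying the weight $L^{2k}$ through the sum produces both displayed lines of the asserted norm formula.

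The routine parts are the Parseval substitution and the bookkeeping of the weights. The one step deserving genuine care — the main obstacle — is justifying that the support restriction to $\mathcal{R}_k$ survives passage to the closure, i.e. that an arbitrary limit of elements of $V_{k,d}$ in the $H^2(\mathbb{A}_{a,b})$-norm still has all Laurent coefficients outside $\mathcal{R}_k$ equal to zero. This is where I would invoke Proposition \ref{PH2k} directly rather than re-proving it, since part 3 there already characterizes the limiting functions of $H^{2,k}$ by exactly the expansions \eqref{RieszTypeConditionsAnnulus} supported on $\mathcal{R}$; the equivalence of norms in part 4 additionally confirms that the $L^2$-convergence on $\mathcal{A}_{a,b}$ recorded before Definition \ref{DHardyAnnulusRn} is compatible with the termwise coefficient identification, so no convergence issue is swept under the rug.
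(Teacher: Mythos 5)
Your proposal is correct and takes essentially the same approach the paper intends: the paper dismisses this proposition with ``The proof is straightforward,'' and the straightforward route is precisely yours --- component-wise Laurent expansion with support in $\mathcal{R}_k$ justified by Proposition \ref{PH2k}, part 3, then the Parseval identity of Remark \ref{RinnerProduct} inserted into the weighted-sum definition \eqref{HKspace} of the $H_{L}^{2}$ norm. Your extra care in checking that the support restriction survives passage to the closure (via continuity of the coefficient functionals coming from orthogonality of the $z^{j}$ on $\partial\mathbb{A}_{a,b}$) is sound and is exactly the one point worth making explicit.
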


The proof is straightforward.

We prove the following fundamental result which contains as its first item that
the space  $H_{L}^{2}\left(  \mathcal{A}_{a,b}\right) $ is a Hilbert one.

\begin{thm}
\label{THKisHilbert} 1. The space $H_{L}^{2}\left(  \mathcal{A}_{a,b}\right)
$ is complete.

2. The following Maximum principle is true: for every compact $K\subset
\mathcal{A}_{a,b}$ there exists a constant $C>0$ such that
\[
\left\vert f\left(  z,\theta\right)  \right\vert \leq\frac{C}{\min\left(
1-\left\vert z\right\vert /b,\left\vert z\right\vert /a-1\right)  }\left\Vert
f\right\Vert _{H_{L}^{2}\left(  \mathcal{A}_{a,b}\right)  }\qquad\text{for
}\left(  z,\theta\right)  \in K.
\]

3. If $f\in H_{L}^{2}\left(  \mathcal{A}_{a,b}\right)  $ then $f\in C^{\infty
}\left(  \mathcal{A}_{a,b}\right)  .$
\end{thm}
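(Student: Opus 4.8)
The plan is to derive all three items from the key estimate in item~2 (the maximum principle), together with the component-space machinery of Proposition~\ref{PH2k} and Remark~\ref{RinnerProduct}. First I would establish the pointwise bound in item~2, since completeness and smoothness both follow once we control $|f(z,\theta)|$ on compacts by $\|f\|_{H_L^2}$. The starting point is the expansion (\ref{HKLaurent}), $f(z,\theta)=\sum_{k,\ell}f_{k,\ell}(z)Y_{k,\ell}(\theta)$. For each fixed index I would apply the maximum principle for $H^2(\mathbb{A}_{a,b})$ from Theorem~\ref{TSarason}(iii) to the one-variable function $f_{k,\ell}$, giving
\[
\left\vert f_{k,\ell}(z)\right\vert \leq\frac{2\bigl(\|f_{k,\ell,a}^{\ast}\|_{L_2}+\|f_{k,\ell,b}^{\ast}\|_{L_2}\bigr)}{\min\bigl(1-|z|/b,\,|z|/a-1\bigr)}\leq\frac{C_1\,\|f_{k,\ell}\|_{H^2(\mathbb{A}_{a,b})}}{\min\bigl(1-|z|/b,\,|z|/a-1\bigr)}.
\]
Combining with the spherical-harmonic bound (\ref{sphericalEstimate}), $|Y_{k,\ell}(\theta)|\leq Ck^{d/2-1}$, and summing over $\ell=1,\dots,a_k$ (recall $a_k$ grows polynomially in $k$), the whole series is dominated by
\[
\sum_{k,\ell}\frac{C_2\,k^{d/2-1}}{\min(\cdots)}\,\|f_{k,\ell}\|_{H^2(\mathbb{A}_{a,b})}.
\]
Here is where the weight $L^{2k}$ does its work: by Cauchy--Schwarz I would split off the factor $L^{2k}$, writing $\|f_{k,\ell}\|_{H^2}=\bigl(L^{2k}\|f_{k,\ell}\|_{H^2}^2\bigr)^{1/2}L^{-k}$, so that
\[
\sum_{k,\ell}k^{d/2-1}\|f_{k,\ell}\|_{H^2}\leq\Bigl(\sum_{k,\ell}L^{2k}\|f_{k,\ell}\|_{H^2}^2\Bigr)^{1/2}\Bigl(\sum_{k,\ell}k^{d-2}a_k L^{-2k}\Bigr)^{1/2}.
\]
The first factor is exactly $\|f\|_{H_L^2}$, and the second is a fixed finite constant because $L>1$ forces geometric decay of $L^{-2k}$ that beats the polynomial growth of $k^{d-2}a_k$. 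This yields item~2 with $C$ depending only on $K$ (through $\min(1-|z|/b,|z|/a-1)\geq\delta_K>0$), $L$, and $d$.

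For item~1 (completeness), I would take a Cauchy sequence $(f^{(n)})$ in $H_L^2$. By definition of the norm (\ref{HKspace}), each component $f_{k,\ell}^{(n)}$ is Cauchy in $H^{2}(\mathbb{A}_{a,b})$ (the weight $L^{2k}$ is a fixed positive constant for each fixed $k$), hence converges to some $f_{k,\ell}\in H^{2,k}(\mathbb{A}_{a,b})$ by completeness of the component spaces, which are closed subspaces of the Hilbert space $H^2(\mathbb{A}_{a,b})$ (Definition~\ref{Dakanalytic}). A standard Fatou/lower-semicontinuity argument on the nonnegative series $\sum_{k,\ell}\|f_{k,\ell}\|_{H^2}^2L^{2k}$ shows the limit $f=\sum_{k,\ell}f_{k,\ell}Y_{k,\ell}$ satisfies $\|f\|_{H_L^2}<\infty$ and $\|f^{(n)}-f\|_{H_L^2}\to0$; concretely, for each truncation $\sum_{k\leq M,\ell}\|f_{k,\ell}^{(n)}-f_{k,\ell}\|_{H^2}^2L^{2k}$ passes to the limit in $n$ via the finite-sum convergence, and uniformity in $M$ comes from the Cauchy property. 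This places $f$ in $H_L^2(\mathcal{A}_{a,b})$ and proves completeness.

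For item~3 ($C^\infty$-smoothness), the maximum principle from item~2 already shows the series (\ref{HKLaurent}) converges uniformly on compact subsets of $\mathcal{A}_{a,b}$, so $f$ is at least continuous. To get $C^\infty$ I would differentiate the series term by term: in the radial variable $z$ each $f_{k,\ell}(z)$ is analytic on $\mathbb{A}_{a,b}$, while in $\theta$ the spherical harmonics satisfy $\Delta_{\mathbb{S}^{d-1}}Y_{k,\ell}=-k(k+d-2)Y_{k,\ell}$, so any angular derivative introduces at most polynomial factors in $k$. Applying the same Cauchy--Schwarz splitting as in item~2 — now against $\sum_{k,\ell}k^{p}a_k L^{-2k}<\infty$ for each fixed $p$, again finite since $L>1$ — shows that every mixed partial derivative of the series converges locally uniformly, which justifies term-by-term differentiation to all orders and gives $f\in C^\infty(\mathcal{A}_{a,b})$.

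\medskip
The main obstacle is item~2, and specifically the interchange of the pointwise maximum-principle estimate with the summation over the double index $(k,\ell)$: one must verify that the geometric weight $L^{2k}$ genuinely dominates the combined polynomial growth coming from the dimension $a_k=\dim\mathcal{H}_k(\mathbb{R}^d)$ and the sup-norm bound $k^{d/2-1}$ on spherical harmonics. Once the series $\sum_{k}k^{d-2}a_kL^{-2k}$ is seen to converge — which is the crux and the reason the weight $L>1$ was built into the norm in the first place — items~1 and~3 are comparatively routine consequences of Hilbert-space completeness of the component spaces and of term-by-term differentiation.
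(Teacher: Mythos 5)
Your proposal is correct and follows essentially the same route as the paper's proof: componentwise application of the annulus maximum principle (Theorem \ref{TSarason}) together with the spherical-harmonic bound (\ref{sphericalEstimate}), the Cauchy--Schwarz splitting $L^{k}\cdot L^{-k}$ against the convergent series $\sum_{k}k^{d-2}a_{k}L^{-2k}$ for item~2, the truncation-and-limit argument on componentwise limits for completeness, and term-by-term differentiation with polynomially growing derivative bounds for smoothness. The only minor difference is in item~3, where the paper invokes Seeley's direct estimate $\left\vert D_{\theta}^{\alpha}Y_{k,\ell}\left(\theta\right)\right\vert \leq C_{1}k^{\left\vert \alpha\right\vert +\frac{d-2}{2}}$ rather than your (compressed, but repairable via elliptic regularity) appeal to the eigenvalue equation of $\Delta_{\mathbb{S}^{d-1}}$; both yield the polynomial-in-$k$ growth that the weight $L^{-2k}$ absorbs.
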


%

\proof
For the proof of item 1), let $\left\{  f^{n}\right\}  _{n\geq1}$ be a Cauchy
sequence in $H_{L}^{2}\left(  \mathcal{A}_{a,b}\right)  .$ Then it is clear
that for every index $\left(  k,\ell\right)  $ the sequence of functions
$\left\{  f_{k,\ell}^{n}\left(  z\right)  \right\}  _{n\geq1}$ is a Cauchy
sequence in $H^{2}\left(  \mathbb{A}_{a,b}\right)  $ (even in $H^{2,k}\left(
\mathbb{A}_{a,b}\right)  $ ), and as such has a limit $g_{k,\ell}\left(
z\right)  \in H^{2,k}\left(  \mathbb{A}_{a,b}\right)  .$

We have to show that the function defined by
\[
g\left(  z,\theta\right)  =%
{\displaystyle\sum_{k,\ell}}
g_{k,\ell}\left(  z\right)  Y_{k,\ell}\left(  \theta\right)
\]
belongs to $H_{L}^{2}\left(  \mathcal{A}_{a,b}\right)  $ and is the limit of
$\left\{  f^{n}\right\}  _{n\geq1}$ there. Let us fix some $\varepsilon>0.$
Let us take some $k_{1}\geq1.$ By the Cauchy sequence, we find $n_{1}\geq1$
such that for all $n,m\geq n_{1}$ we have
\[%
{\displaystyle\sum_{k=0}^{k_{1}}}
{\displaystyle\sum_{\ell=1}^{a_{k}}}
\left\Vert f_{k,\ell}^{n}-f_{k,\ell}^{m}\right\Vert _{H^{2}\left(
\mathbb{A}_{a,b}\right)  }^{2}L^{2k}\leq\left\Vert f^{n}-f^{m}\right\Vert
_{H_{L}^{2}\left(  \mathcal{A}_{a,b}\right)  }^{2}<\varepsilon^{2}.
\]
Taking the limit for $m\longrightarrow\infty$ we obtain
\[%
{\displaystyle\sum_{k=0}^{k_{1}}}
{\displaystyle\sum_{\ell=1}^{a_{k}}}
\left\Vert f_{k,\ell}^{n}-g_{k,\ell}\right\Vert _{H^{2}\left(  \mathbb{A}%
_{a,b}\right)  }^{2}L^{2k}<\varepsilon^{2}.
\]
Then taking the limit $k_{1}\longrightarrow\infty$ ends the proof.

2). Let us consider an annulus $\mathcal{A}_{a^{\prime},b^{\prime}}$ with
$a<a^{\prime}$ and $b^{\prime}<b$ which satisfies $K\subset\mathcal{A}%
_{a^{\prime},b^{\prime}}.$ For a function $f$ given by (\ref{fztheta}), by the
triangle inequality we obtain for all $\left(  z,\theta\right)  \in K$ the
following inequality:
\[
\left\vert f\left(  z,\theta\right)  \right\vert \leq%
{\displaystyle\sum_{k,\ell}}
\left\vert f_{k,\ell}\left(  z\right)  \right\vert \left\vert Y_{k,\ell
}\left(  \theta\right)  \right\vert .
\]
By the maximum principle for the annulus, Theorem \ref{TSarason}, and the
estimate for the spherical harmonics (\ref{sphericalEstimate}) we obtain
\begin{equation}
\left\vert f\left(  z,\theta\right)  \right\vert \leq\frac{2C}{\min\left(
1-\left\vert z\right\vert /b,\left\vert z\right\vert /a-1\right)  }%
{\displaystyle\sum_{k,\ell}}
\left\Vert f_{k,\ell}\right\Vert _{H^{2}\left(  \mathbb{A}_{a,b}\right)
}k^{\frac{d}{2}-1}. \label{absfztheta}%
\end{equation}
Further, by applying the Cauchy-Bunyakovsky-Schwarz inequality, for every
$\left(  z,\theta\right)  \in\mathcal{A}_{a^{\prime},b^{\prime}}$ we obtain
\begin{align*}
&  \left\vert f\left(  z,\theta\right)  \right\vert \\
&  \leq\frac{2C}{\min\left(  1-\left\vert z\right\vert /b,\left\vert
z\right\vert /a-1\right)  }%
{\displaystyle\sum_{k,\ell}}
L^{k}\left\Vert f_{k,\ell}\right\Vert _{H^{2}\left(  \mathbb{A}_{a^{\prime
},b^{\prime}}\right)  }k^{\frac{d}{2}-1}L^{-k}\\
&  \leq\frac{2C}{\min\left(  1-\left\vert z\right\vert /b,\left\vert
z\right\vert /a-1\right)  }\left(
{\displaystyle\sum_{k,\ell}}
L^{2k}\left\Vert f_{k,\ell}\right\Vert _{H^{2}\left(  \mathbb{A}_{a^{\prime
},b^{\prime}}\right)  }^{2}\right)  ^{1/2}\left(
{\displaystyle\sum_{k,\ell}}
k^{d-2}L^{-2k}\right)  ^{1/2}.
\end{align*}
Since the constant $a_{k}$ in (\ref{Ykl}) is estimated by $a_{k}=O\left(
k^{d-2}\right)  $ (cf. \cite{steinWeiss}, chapter $4.2$) it follows that for
all $\left(  z,\theta\right)  \in\mathcal{A}_{a^{\prime},b^{\prime}}$ holds
\begin{align*}
\left\vert f\left(  z,\theta\right)  \right\vert  &  \leq\frac{C_{1}}%
{\min\left(  1-\left\vert z\right\vert /b,\left\vert z\right\vert /a-1\right)
}\left\Vert f\right\Vert _{H_{L}^{2}\left(  \mathcal{A}_{a,b}\right)  }%
\times\left(
{\displaystyle\sum_{k=0}^{\infty}}
k^{d-1}k^{d-2}L^{-2k}\right)  ^{1/2}\\
&  \leq\frac{C_{2}}{\min\left(  1-\left\vert z\right\vert /b,\left\vert
z\right\vert /a-1\right)  }\left\Vert f\right\Vert _{H_{L}^{2}\left(
\mathcal{A}_{a,b}\right)  }.
\end{align*}
This ends the proof.

3. As above we consider $\mathcal{A}_{a^{\prime},b^{\prime}}.$ Let
$f\in\mathcal{V}$ have the expansion
\[
f\left(  z,\theta\right)  =%
{\displaystyle\sum_{k,\ell}}
f_{k,\ell}\left(  z\right)  Y_{k,\ell}\left(  \theta\right)  .
\]
For $\left(  z,\theta\right)  \in\mathcal{A}_{a^{\prime},b^{\prime}}$ we
obtain the estimate
\[
\left\vert \partial f\left(  z,\theta\right)  /\partial z\right\vert \leq%
{\displaystyle\sum_{k,\ell}}
\left\vert f_{k,\ell}^{\prime}\left(  z\right)  \right\vert \left\vert
Y_{k,\ell}\left(  \theta\right)  \right\vert .
\]
By the usual Maximum principle in the annulus $\mathbb{A}_{a,b}$ (which is
equivalent to continuity of point evaluation operator in $H^{2}\left(
\mathbb{A}_{a,b}\right)  ,$ cf. section $17.8$ in \cite{rudin}) we obtain
\[
\left\vert \partial f\left(  z,\theta\right)  /\partial z\right\vert \leq C%
{\displaystyle\sum_{k,\ell}}
\left\Vert f_{k,\ell}\right\Vert _{H^{2}\left(  \mathbb{A}_{a,b}\right)
}k^{\frac{d}{2}-1}\qquad\text{for all }\left(  z,\theta\right)  \in
\mathcal{A}_{a^{\prime},b^{\prime}},
\]
and the last is convergent as we have seen above. In a similar way, for some
derivative $D_{\theta}^{\alpha}$ in $\theta,$ where $\alpha$ is a multiindex,
we apply the following inequality
\[
\left\vert D_{\theta}^{\alpha}Y_{k,\ell}\left(  \theta\right)  \right\vert
\leq C_{1}k^{\left\vert \alpha\right\vert +\frac{d-2}{2}}\qquad\text{for
}k\geq1,\ \theta\in\mathbb{S}^{d-1},
\]
cf. \cite{seeley}, p. 120. This ends the proof.%

\endproof

\begin{rem}
Formula (\ref{HKLaurent}) is a generalization of the Laurent expansion.
\end{rem}

\begin{rem}
\label{RHardyBall} Let us indicate a relation between the Hardy space
$H_{L}^{2}\left(  \mathcal{A}_{a,b}\right)  $ and analogous space on the ball
$\mathbb{D}\times\mathbb{S}^{d-1}/\mathbb{Z}_{2}$ of the Klein-Dirac quadric
$\operatorname{KDQ}$ given in (\ref{KDQ}). Let the function $f\in H_{L}%
^{2}\left(  \mathcal{A}_{a,b}\right)  $ be polyharmonic  of  order $N$ in the sense
that in representation (\ref{fztheta})%
\[
f\left(  z,\theta\right)  =%
{\displaystyle\sum_{k,\ell}}
\left(  \widetilde{f}_{k,\ell}\left(  z^{2}\right)  z^{k}+\widetilde
{\widetilde{f}}_{k,\ell}\left(  z^{2}\right)  z^{-d-k+2}\right)  Y_{k,\ell
}\left(  \theta\right)
\]
the functions $\widetilde{f}_{k,\ell},$ $\widetilde{\widetilde{f}}_{k,\ell}$
are polynomials of degree $N-1.$ Then we have the representation
\[
f\left(  z,\theta\right)  =f_{1}\left(  z,\theta\right)  +z^{-d+2}f_{2}\left(
z,\theta\right)  ,
\]
where the functions
\begin{align*}
f_{1}\left(  z,\theta\right)   &  =%
{\displaystyle\sum_{k,\ell}}
\widetilde{f}_{k,\ell}\left(  z^{2}\right)  Y_{k,\ell}\left(  z\theta\right)
\\
f_{2}\left(  z,\theta\right)   &  =%
{\displaystyle\sum_{k,\ell}}
\widetilde{\widetilde{f}}_{k,\ell}\left(  z^{2}\right)  Y_{k,\ell}\left(
\frac{\theta}{z}\right)
\end{align*}
a obviously defined on the $\operatorname{KDQ}\setminus\left\{  0\right\}  .$
\end{rem}

\section{The Cauchy type kernel in the component spaces $H^{2,k}\left(
\mathbb{A}_{a,b}\right)  $ \label{ScauchyComponent}}

By the Cauchy formula in the annulus $\mathbb{A}_{a,b},$ for every function
$f$ which is analytic in a neighborhood of $\mathbb{A}_{a,b}$ we have:
\begin{align*}
f\left(  z\right)   &  =\frac{1}{2\pi i}\int_{\Gamma_{b}}\frac{f\left(
\tau\right)  d\tau}{\tau-z}-\frac{1}{2\pi i}\int_{\Gamma_{a}}\frac{f\left(
\tau\right)  d\tau}{\tau-z}\\
&  =\frac{1}{2\pi i}\int_{\Gamma_{b}}\left(  \sum_{j=0}^{\infty}\frac{z^{j}%
}{\tau^{j+1}}\right)  f\left(  \tau\right)  d\tau+\frac{1}{2\pi i}\int
_{\Gamma_{a}}\left(  \sum_{j=0}^{\infty}\frac{\tau^{j}}{z^{j+1}}\right)
f\left(  \tau\right)  d\tau\\
&  =\frac{1}{2\pi}\int_{0}^{2\pi}\left(  \sum_{j=0}^{\infty}\frac{z^{j}}%
{\tau^{j}}\right)  f\left(  \tau\right)  \Big |_{\tau=be^{i\varphi}}d\varphi\\
&  +\frac{1}{2\pi}\int_{0}^{2\pi}\left(  \sum_{j=0}^{\infty}\frac{\tau^{j+1}%
}{z^{j+1}}\right)  f\left(  \tau\right)  \Big |_{\tau=ae^{i\varphi}}d\varphi,
\end{align*}
where we assume that the circular contours $\Gamma_{a}=a\times\mathbb{S}^{1}$
and $\Gamma_{b}=b\times\mathbb{S}^{1}$ have the \textquotedblright
positive\textquotedblright\ (anti-clockwise) orientation and are parametrized
by $\tau=ae^{i\varphi}$ and $\tau=be^{i\varphi};$ we used the notation
$g\left(  \tau\right)  \Big |_{\tau=be^{i\varphi}}=g\left(  be^{i\varphi
}\right)  .$ Hence, we select out of the above series only those terms which
belong to the space (\ref{Vkn}) and we define the following kernels:
\begin{align}
K_{k}^{1}\left(  z,\tau\right)   &  :=\sum_{j=0}^{\infty}\left(  \frac{z}%
{\tau}\right)  ^{k+2j}\qquad\qquad\quad\text{for }\left\vert \tau\right\vert
=b\label{Kk1}\\
K_{k}^{2}\left(  z,\tau\right)   &  :=\sum_{m=-d-k+2+2j\geq0}\left(  \frac
{z}{\tau}\right)  ^{m}\qquad\text{for }\left\vert \tau\right\vert
=b\label{Kk2}\\
K_{k}^{3}\left(  z,\tau\right)   &  :=\sum_{m=-d-k+2+2j<0}\left(  \frac
{z}{\tau}\right)  ^{m}\qquad\text{for }\left\vert \tau\right\vert
=a.\label{Kk3}%
\end{align}
It follows that for every $f\in V_{k,d}$ holds
\begin{align*}
f\left(  z\right)   &  =\frac{1}{2\pi}\int_{0}^{2\pi}\left(  K_{1}\left(
z,\tau\right)  +K_{2}\left(  z,\tau\right)  \right)  f\left(  \tau\right)
\Big |_{\tau=be^{i\varphi}}d\varphi\\
&  +\frac{1}{2\pi}\int_{0}^{2\pi}K_{3}\left(  z,\tau\right)  f\left(
\tau\right)  \Big |_{\tau=ae^{i\varphi}}d\varphi
\end{align*}
Now for every integer $k\geq0$ we will define the \textbf{Cauchy type kernel
for the annulus } $K_{k}$ by setting:
\begin{align}
K_{k}\left(  z,\tau\right)   &  :=K_{k}^{1}\left(  z,\tau\right)  +K_{k}%
^{2}\left(  z,\tau\right)  \qquad\text{for }\left\vert \tau\right\vert
=b\label{Kk}\\
K_{k}\left(  z,\tau\right)   &  :=K_{k}^{3}\left(  z,\tau\right)  \qquad
\qquad\qquad\quad\text{for }\left\vert \tau\right\vert =a.
\end{align}

We have the following Proposition which shows that the function $K_{k}$ is the
Cauchy type kernel for the space $H^{2,k}\left(  \mathbb{A}_{a,b}\right)  $
endowed with the inner product in Remark \ref{RinnerProduct}.

\begin{prop}
\label{PSzego} Let us denote by $f^{\ast}$ the limiting value of the function
$f\in H^{2,k}\left(  \mathbb{A}_{a,b}\right)  $ on $\partial\mathbb{A}_{a,b}%
.$Then for all $z\in\mathbb{A}_{a,b}$ the following Cauchy type formula holds
\begin{align*}
f\left(  z\right)   &  =\frac{1}{2\pi}\int_{0}^{2\pi}\left(  K_{k}^{1}\left(
z,\tau\right)  +K_{k}^{2}\left(  z,\tau\right)  \right)  f^{\ast}\left(
\tau\right)  \Big |_{\tau=be^{i\varphi}}d\varphi\\
&  +\frac{1}{2\pi}\int_{0}^{2\pi}K_{k}^{3}\left(  z,\tau\right)  f^{\ast
}\left(  \tau\right)  \Big |_{\tau=ae^{i\varphi}}d\varphi\\
&  =\frac{1}{2\pi i}%
{\displaystyle\int_{\partial\mathbb{A}_{a,b}}}
K_{k}\left(  z,\tau\right)  f^{\ast}\left(  \tau\right)  \frac{1}{\tau}d\tau\\
&  =\left\langle K_{k}\left(  z,\cdot\right)  ,f\left(  \cdot\right)
\right\rangle _{H^{2}\left(  \mathbb{A}_{a,b}\right)  }.
\end{align*}

\end{prop}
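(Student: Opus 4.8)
The plan is to prove the identity first on the dense subspace $V_{k,d}$ of (\ref{Vkn}) and then to extend it to all of $H^{2,k}\left(\mathbb{A}_{a,b}\right)$ by continuity. By Definition \ref{Dakanalytic} the space $H^{2,k}\left(\mathbb{A}_{a,b}\right)$ is by construction the $H^{2}$-closure of $V_{k,d}$, so it suffices to treat a dense set. For a generic element $f\in V_{k,d}$, that is $f\left(z\right)=\widetilde{u}\left(z^{2}\right)z^{k}+\widetilde{\widetilde{u}}\left(z^{2}\right)z^{-d-k+2}$ with polynomials $\widetilde{u},\widetilde{\widetilde{u}}$, the function $f$ is analytic in a full neighbourhood of $\overline{\mathbb{A}_{a,b}}$ and its limiting function $f^{\ast}$ is just its restriction to $\partial\mathbb{A}_{a,b}$. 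Hence the Cauchy-type identity is exactly the computation already carried out in the lines preceding the Proposition: applying the Cauchy integral formula on the two circles, expanding $\left(\tau-z\right)^{-1}$ in the appropriate geometric series, and retaining only the terms indexed by $\mathcal{R}_{k}$ of (\ref{RieszSetAnnulus}) --- which is precisely what the definitions (\ref{Kk1})--(\ref{Kk3}) of $K_{k}^{1},K_{k}^{2},K_{k}^{3}$ encode. Thus on $V_{k,d}$ the first displayed equality holds verbatim.

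To pass to the limit, I would check that both sides define continuous linear functionals of $f$ on $H^{2,k}\left(\mathbb{A}_{a,b}\right)$ for each fixed interior point $z$ with $a<\left\vert z\right\vert <b$. On the left, $f\mapsto f\left(z\right)$ is continuous because point evaluation is bounded on $H^{2}\left(\mathbb{A}_{a,b}\right)$, which is the maximum principle of Theorem \ref{TSarason}(iii). On the right, for $z$ fixed in the interior one has $\left\vert z/\tau\right\vert=\left\vert z\right\vert /b<1$ on the outer circle and $\left\vert \tau/z\right\vert=a/\left\vert z\right\vert <1$ on the inner circle, so each of the geometric series (\ref{Kk1})--(\ref{Kk3}) converges uniformly there and $K_{k}\left(z,\cdot\right)$ is a bounded continuous function on $\partial\mathbb{A}_{a,b}$. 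Since the boundary-value map $f\mapsto f^{\ast}$ is (by the very definition of the norm, cf. Remark \ref{RinnerProduct} and Proposition \ref{PH2k}) bounded from $H^{2,k}$ into $L_{2}\left(\partial\mathbb{A}_{a,b}\right)$, the functional $f\mapsto\frac{1}{2\pi i}\int_{\partial\mathbb{A}_{a,b}}K_{k}\left(z,\tau\right)f^{\ast}\left(\tau\right)\tau^{-1}d\tau$ is continuous by Cauchy--Schwarz. Two continuous functionals agreeing on the dense set $V_{k,d}$ coincide, which yields the first two equalities for every $f\in H^{2,k}\left(\mathbb{A}_{a,b}\right)$; the passage from the two scalar integrals to the single contour integral is only the reparametrization $\tau^{-1}d\tau=i\,d\varphi$ together with the orientation convention on the two boundary circles.

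Finally, for the identification with the inner product I would expand $f\left(z\right)=\sum_{j\in\mathcal{R}_{k}}c_{j}z^{j}$ and its boundary Fourier series from Proposition \ref{PH2k}, insert the series (\ref{Kk}) for $K_{k}$, and integrate term by term, using the orthogonality of $\left\{e^{ij\varphi}\right\}$ on each circle; the surviving terms reproduce exactly $\sum_{j\in\mathcal{R}_{k}}c_{j}z^{j}=f\left(z\right)$, and collecting them against the coefficient formula of Remark \ref{RinnerProduct} identifies the contour integral with $\left\langle K_{k}\left(z,\cdot\right),f\right\rangle_{H^{2}\left(\mathbb{A}_{a,b}\right)}$. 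The main obstacle I expect is bookkeeping rather than analysis: one must keep careful track of which powers survive on the outer versus the inner circle (the splitting encoded by $K_{k}^{1}+K_{k}^{2}$ on $\left\vert\tau\right\vert=b$ and $K_{k}^{3}$ on $\left\vert\tau\right\vert=a$), and of the conjugation and orientation conventions, so that the residue-type contour pairing matches the inner product of Remark \ref{RinnerProduct}; the interchange of summation and integration is justified throughout by the uniform convergence of the kernel series on the boundary.
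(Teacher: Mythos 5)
Your argument is correct, and it reaches the proposition by a genuinely different route than the paper. The paper's own proof is a one-line reduction: for a general $f\in H^{2,k}\left(\mathbb{A}_{a,b}\right)$ it invokes the boundary-value Cauchy formula for Hardy spaces (Theorem \ref{TSarason}(ii), i.e. Sarason's Theorem 1), expands the two Cauchy kernels in geometric series, and uses the fact that the boundary Fourier expansions of such $f$ are supported on the index set $\mathcal{R}_{k}$ (Proposition \ref{PH2k}, part 3), so that orthogonality annihilates exactly the discarded terms and the full Cauchy kernel may be replaced by $K_{k}$. You instead verify the identity on the dense subspace $V_{k,d}$ — where it is the elementary computation preceding the proposition, legitimate because such $f$ are analytic across $\overline{\mathbb{A}_{a,b}}$ — and then extend by continuity, using boundedness of point evaluation (the maximum principle of Theorem \ref{TSarason}(iii)) on one side and boundedness of $K_{k}\left(z,\cdot\right)$ on $\partial\mathbb{A}_{a,b}$ together with the isometric (up to normalization) boundary map $f\mapsto f^{\ast}$ and Cauchy--Schwarz on the other. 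Your route is more self-contained: it needs only the classical Cauchy integral formula for functions analytic in a neighbourhood of the closed annulus plus soft functional analysis, not the representation theorem for boundary values of general $H^{2}$ functions. The paper's route is shorter given the cited machinery, requires no approximation step, and makes visible that the Fourier-support property of the boundary values is the structural fact that characterizes $H^{2,k}$.

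One point you should make explicit rather than leave as "bookkeeping of conjugation conventions": with the inner product of Remark \ref{RinnerProduct}, which is conjugate-linear in its second entry, the expression $\left\langle K_{k}\left(z,\cdot\right),f\left(\cdot\right)\right\rangle _{H^{2}\left(\mathbb{A}_{a,b}\right)}$ is conjugate-linear in $f$, whereas $f\left(z\right)$ and the contour integral are linear in $f$; a linear functional can coincide with a conjugate-linear one only if both vanish identically, and indeed for a monomial $f\left(z\right)=z^{j}$ with $j>0$, $j\in\mathcal{R}_{k}$, the sesquilinear pairing returns $0$ rather than $z^{j}$. Hence the final equality of the proposition is correct only when $\left\langle \cdot,\cdot\right\rangle$ is read as the bilinear boundary pairing $\frac{1}{2\pi}\int K_{k}\left(z,\cdot\right)f^{\ast}\,d\varphi$ summed over the two circles (equivalently, with the kernel conjugated in the variable $\tau$). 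Your plan of expanding both factors in Fourier series and integrating term by term is precisely the computation that surfaces this; carrying it out and stating the convention closes the only loose end in your write-up — a loose end that is present in the paper's own statement as well.
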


The proof follows directly from the Cauchy formula for the Hardy space
$H^{2}\left(  \mathbb{D}_{c}\right)  $ (see also Theorem $1$ on p. $9$ in
\cite{sarason}).

A nice feature of the kernel $K_{k}$ is that we have concise expressions:
Obviously, we have
\[
K_{k}^{1}\left(  z,\tau\right)  =\left(  \frac{z}{\tau}\right)  ^{k}\frac
{1}{1-\left(  \frac{z}{\tau}\right)  ^{2}}=\left(  \frac{z}{\tau}\right)
^{k}\frac{\tau^{2}}{\tau^{2}-z^{2}}.
\]
For $k$ \emph{odd} we have the expressions
\begin{align*}
K_{k}^{2}\left(  z,\tau\right)   &  =\sum_{p=0}^{\infty}\left(  \frac{z}{\tau
}\right)  ^{2p}=\frac{1}{1-\left(  \frac{z}{\tau}\right)  ^{2}}=\frac{\tau
^{2}}{\tau^{2}-z^{2}}\qquad\qquad\text{ for }\left\vert \tau\right\vert =b\\
K_{k}^{3}\left(  z,\tau\right)   &  =\sum_{p=1}^{\frac{d+k-2}{2}}\left(
\frac{\tau}{z}\right)  ^{2p}=-\left(  1-\frac{\tau^{d+k-2}}{z^{d+k-2}}\right)
\times K_{k}^{2}\qquad\text{ for }\left\vert \tau\right\vert =a.
\end{align*}
For $k$ \emph{even} we have the expressions
\begin{align*}
K_{k}^{2}\left(  z,\tau\right)   &  =\sum_{p=0}^{\infty}\left(  \frac{z}{\tau
}\right)  ^{2p+1}=\frac{z}{\tau}\frac{\tau^{2}}{\tau^{2}-z^{2}}\qquad
\qquad\qquad\qquad\qquad\text{ for }\left\vert \tau\right\vert =b\\
K_{k}^{3}\left(  z,\tau\right)   &  =\sum_{p=1}^{\frac{d+k-2}{2}}\left(
\frac{\tau}{z}\right)  ^{2p}=-\frac{\tau}{z}\left(  1-\frac{\tau^{d+k-2}%
}{z^{d+k-2}}\right)  \times K_{k}^{2}\left(  z,\tau\right)  \qquad\text{ for
}\left\vert \tau\right\vert =a
\end{align*}

The following estimates are important.

\begin{thm}
\label{TkernelKk} For every $\varepsilon>0$ with $\varepsilon\leq\left(
b-a\right)  /3$ and for $z$ and $\tau$ satisfying $a+\varepsilon<\left\vert
z\right\vert <b-\varepsilon$ and $\left\vert \tau\right\vert =a,$ or
$\left\vert \tau\right\vert =b,$ and for all $k\geq0$ holds
\[
\left\vert K_{k}\left(  z,\tau\right)  \right\vert \leq C_{\varepsilon},
\]
where the constant $C_{\varepsilon}>0$ is independent of $k.$
\end{thm}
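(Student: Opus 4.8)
The plan is to estimate each of the three constituent kernels $K_{k}^{1},K_{k}^{2},K_{k}^{3}$ directly from its series definition (\ref{Kk1})--(\ref{Kk3}) rather than from the closed forms, because the geometric series make the uniformity in $k$ transparent. The only arithmetic input needed is that on each of the two boundary circles the ratio appearing in the series is bounded away from $1$ by a gap that depends on $\varepsilon$ (and $a,b$) but not on $k$. Concretely, for $a+\varepsilon<\left\vert z\right\vert <b-\varepsilon$ I set $p:=\left(b-\varepsilon\right)/b<1$ and $q:=a/\left(a+\varepsilon\right)<1$; the hypothesis $\varepsilon\leq\left(b-a\right)/3$ only serves to keep the admissible range $a+\varepsilon<\left\vert z\right\vert <b-\varepsilon$ nonempty.

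First I would treat the circle $\left\vert \tau\right\vert =b$, on which $K_{k}=K_{k}^{1}+K_{k}^{2}$. Here $\left\vert z/\tau\right\vert =\left\vert z\right\vert /b\leq p<1$, so every term of (\ref{Kk1}) obeys $\left\vert \left(z/\tau\right)^{k+2j}\right\vert \leq p^{k+2j}$ and hence $\left\vert K_{k}^{1}\left(z,\tau\right)\right\vert \leq p^{k}\sum_{j=0}^{\infty}p^{2j}=p^{k}/\left(1-p^{2}\right)\leq 1/\left(1-p^{2}\right)$; the decaying factor $p^{k}\leq1$ is exactly what removes the $k$-dependence. Applying the same termwise bound to (\ref{Kk2}), whose exponents $-d-k+2+2j\geq0$ are distinct nonnegative integers, gives $\left\vert K_{k}^{2}\left(z,\tau\right)\right\vert \leq\sum_{n\geq0}p^{n}=1/\left(1-p\right)$. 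Adding these bounds the kernel on $\left\vert \tau\right\vert =b$ by a constant depending only on $p$. (The same conclusion can be read off the closed forms, using $\left\vert \tau^{2}-z^{2}\right\vert \geq b^{2}-\left\vert z\right\vert ^{2}\geq\varepsilon\left(2b-\varepsilon\right)$ to control $\left\vert \tau^{2}/\left(\tau^{2}-z^{2}\right)\right\vert $ and $\left\vert z/\tau\right\vert \leq p$ for the prefactors.)

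Next comes the circle $\left\vert \tau\right\vert =a$, where $K_{k}=K_{k}^{3}$ is the \emph{finite} sum (\ref{Kk3}) over exponents $m=-d-k+2+2j<0$. For $\left\vert \tau\right\vert =a$ and $\left\vert z\right\vert >a+\varepsilon$ each summand satisfies $\left\vert \left(z/\tau\right)^{m}\right\vert =\left(a/\left\vert z\right\vert \right)^{\left\vert m\right\vert }\leq q^{\left\vert m\right\vert }$ with $\left\vert m\right\vert \geq1$, and the occurring $\left\vert m\right\vert $ are distinct positive integers. Therefore $\left\vert K_{k}^{3}\left(z,\tau\right)\right\vert \leq\sum_{n=1}^{\infty}q^{n}=q/\left(1-q\right)$, again independent of $k$. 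This is the crux of the argument: although the number of summands in $K_{k}^{3}$ grows like $\left(d+k-2\right)/2$ with $k$, the common ratio $q<1$ is fixed, so the finite sum is dominated by a single convergent geometric series whose value does not feel $k$.

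Combining the two circles, $C_{\varepsilon}:=\max\left\{1/\left(1-p^{2}\right)+1/\left(1-p\right),\,q/\left(1-q\right)\right\}$ works and depends only on $a,b,\varepsilon$. I anticipate no real obstacle; the only point requiring care is the $k$-dependence of the \emph{length} of the sum defining $K_{k}^{3}$, which is precisely neutralized by the uniform gap $\left\vert \tau/z\right\vert \leq q<1$ coming from the strict inequality $\left\vert z\right\vert >a+\varepsilon$. The parity split into odd and even $k$ in the explicit formulas is irrelevant for the estimate, since in each case the relevant exponents are either all $\geq0$ (on $\left\vert \tau\right\vert =b$) or all $\leq-1$ (on $\left\vert \tau\right\vert =a$).
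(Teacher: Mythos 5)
Your proposal is correct and is essentially the paper's own argument: the paper's one-line proof appeals to the explicit geometric-series (closed-form) expressions of $K_k^1$, $K_k^2$, $K_k^3$, and your termwise bounds $\left\vert z/\tau\right\vert \le (b-\varepsilon)/b<1$ on $\left\vert \tau\right\vert =b$ and $\left\vert \tau/z\right\vert \le a/(a+\varepsilon)<1$ on $\left\vert \tau\right\vert =a$ are exactly the uniform gaps that make those expressions bounded independently of $k$. In particular, your key observation that the growing length of the finite sum $K_k^3$ is neutralized by the fixed ratio $q<1$ is the same point the paper's closed form encodes in the factor $1-\tau^{d+k-2}/z^{d+k-2}$, whose modulus stays below $2$.
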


The proof follows directly from the above expressions for the kernels
$K_{k}^{j},$ $j=1,2,3.$

\section{Cauchy type kernel for the complexified annulus $\mathcal{A}_{a,b}$
\label{ScauchyHK}}

Finally, we are able to construct the Cauchy kernel for the complexified
annulus $\mathcal{A}_{a,b}.$

First, we prove  the following:

\begin{prop}
Let $f\in H_{L}^{2}\left(  \mathcal{A}_{a,b}\right)  .$ Then in $L_{2}$ sense
the following limits hold true:
\begin{align*}
\lim_{r\rightarrow a}f\left(  re^{i\varphi},\theta\right)   &  =%
{\displaystyle\sum_{k,\ell}}
f_{k,\ell}^{\ast}\left(  ae^{i\varphi}\right)  Y_{k,\ell}\left(
\theta\right)  \\
\lim_{r\rightarrow b}f\left(  re^{i\varphi},\theta\right)   &  =%
{\displaystyle\sum_{k,\ell}}
f_{k,\ell}^{\ast}\left(  be^{i\varphi}\right)  Y_{k,\ell}\left(
\theta\right)  .
\end{align*}

\end{prop}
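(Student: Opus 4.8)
The plan is to reduce the two-dimensional limit in the cross-sectional variables $(\varphi,\theta)$ to the one-dimensional radial $L^2$ boundary convergence already available for each Laplace--Fourier component, and then to justify interchanging the limit $r\to a$ (resp.\ $r\to b$) with the summation over $(k,\ell)$ by a dominated-convergence argument in which the weight $L>1$ supplies the needed summability.

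First I would record the scalar ingredient. For each fixed index $(k,\ell)$ the component $f_{k,\ell}$ lies in $H^{2,k}(\mathbb{A}_{a,b})\subset H^{2}(\mathbb{A}_{a,b})$, and by the classical annulus theory of Theorem \ref{TSarason} it possesses limiting functions $f_{k,\ell}^{\ast}$ on $\partial\mathbb{A}_{a,b}$ with
\[
\lim_{r\rightarrow a}\int_{0}^{2\pi}\left\vert f_{k,\ell}\left(re^{i\varphi}\right)-f_{k,\ell}^{\ast}\left(ae^{i\varphi}\right)\right\vert^{2}d\varphi=0,
\]
and the analogous statement as $r\rightarrow b$. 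This is the usual radial $L^2$ boundary convergence of Hardy functions, obtained by applying the interior-disc and exterior-disc convergence separately to the two pieces in the splitting $f_{k,\ell}=g_{k,\ell}+h_{k,\ell}$ furnished by Theorem \ref{TSarason}.

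Next I would slice the full norm. Setting $g\left(\varphi,\theta\right):=\sum_{k,\ell}f_{k,\ell}^{\ast}\left(ae^{i\varphi}\right)Y_{k,\ell}\left(\theta\right)$ and invoking the orthonormality of $\left\{Y_{k,\ell}\right\}$ on $\mathbb{S}^{d-1}$ together with Parseval's identity in $\theta$, the squared $L^{2}\left(\mathcal{A}_{a,b}\right)$ discrepancy on the cross-section of radius $r$ factors as
\[
\int_{0}^{2\pi}\int_{\mathbb{S}^{d-1}}\left\vert f\left(re^{i\varphi},\theta\right)-g\left(\varphi,\theta\right)\right\vert^{2}d\theta\,d\varphi=\sum_{k,\ell}\delta_{k,\ell}\left(r\right),
\]
where $\delta_{k,\ell}\left(r\right):=\int_{0}^{2\pi}\left\vert f_{k,\ell}\left(re^{i\varphi}\right)-f_{k,\ell}^{\ast}\left(ae^{i\varphi}\right)\right\vert^{2}d\varphi$. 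By the previous step each $\delta_{k,\ell}\left(r\right)\rightarrow0$ as $r\rightarrow a$, so it remains only to exchange this limit with the infinite sum.

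The main obstacle, namely the interchange, I would handle by producing an $r$-independent summable majorant. By the triangle inequality, Remark \ref{PequivalentNorms} (which gives $\sup_{a<r<b}M_{2}\left(f_{k,\ell};r\right)\leq\left\Vert f_{k,\ell}\right\Vert_{H^{2}\left(\mathbb{A}_{a,b}\right)}$), and the identity $\left\Vert f_{k,\ell}\right\Vert_{H^{2}}^{2}=\left\Vert f_{k,\ell,a}^{\ast}\right\Vert_{L_{2}}^{2}+\left\Vert f_{k,\ell,b}^{\ast}\right\Vert_{L_{2}}^{2}$ of Remark \ref{RinnerProduct}, one obtains a universal constant $C$ with $\delta_{k,\ell}\left(r\right)\leq C\left\Vert f_{k,\ell}\right\Vert_{H^{2}\left(\mathbb{A}_{a,b}\right)}^{2}$ for every $r\in\left(a,b\right)$. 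Since $L>1$ forces $L^{2k}\geq1$, the defining norm (\ref{HKspace}) yields $\sum_{k,\ell}\left\Vert f_{k,\ell}\right\Vert_{H^{2}\left(\mathbb{A}_{a,b}\right)}^{2}\leq\left\Vert f\right\Vert_{H_{L}^{2}}^{2}<\infty$, so the majorant is summable and independent of $r$. Dominated convergence for series (Tannery's theorem) then permits passing the limit inside the sum, giving $\sum_{k,\ell}\delta_{k,\ell}\left(r\right)\rightarrow0$ as $r\rightarrow a$, which is precisely the claimed $L^{2}$ convergence; the case $r\rightarrow b$ is identical with $a$ replaced by $b$.
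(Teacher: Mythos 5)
Your proposal is correct and takes essentially the same route as the paper's own proof: component-wise $L^{2}$ boundary convergence from the classical annulus theory, orthonormality of the $Y_{k,\ell}$ to reduce the cross-sectional discrepancy to $\sum_{k,\ell}\delta_{k,\ell}(r)$, and the observation that $L^{2k}\geq 1$ makes the unweighted sum $\sum_{k,\ell}\Vert f_{k,\ell}\Vert_{H^{2}}^{2}$ finite, which is exactly what licenses the interchange of limit and sum. The only cosmetic difference is that the paper performs the interchange by an explicit $\varepsilon/2$ splitting into a uniformly small tail plus a finite part (mimicking Rudin's Theorem 17.10), whereas you package the identical argument as Tannery's theorem with an $r$-independent summable majorant.
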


The proof mimics the classical proof in \cite{rudin}, Theorem $17.10.$%

\proof
First of all, we see that the function
\[
g\left(  e^{i\varphi},\theta\right)  =%
{\displaystyle\sum_{k,\ell}}
f_{k,\ell}^{\ast}\left(  ae^{i\varphi}\right)  Y_{k,\ell}\left(
\theta\right)
\]
belongs to $L_{2}\left(  \mathbb{S}^{1}\times\mathbb{S}^{d-1}\right)  .$
Indeed, we have the estimate
\begin{gather*}%
{\displaystyle\int_{\mathbb{S}^{d-1}}}
{\displaystyle\int_{0}^{2\pi}}
\left\vert
{\displaystyle\sum_{k,\ell}}
f_{k,\ell}^{\ast}\left(  ae^{i\varphi}\right)  Y_{k,\ell}\left(
\theta\right)  \right\vert ^{2}d\theta d\varphi\leq%
{\displaystyle\sum_{k,\ell}}
\left\Vert f_{k,\ell}^{\ast}\left(  ae^{i\varphi}\right)  \right\Vert
_{L_{2}\left(  \partial\mathbb{A}_{a,b}\right)  }^{2}\\
\leq%
{\displaystyle\sum_{k,\ell}}
\left\Vert f_{k,\ell}\right\Vert _{H^{2}\left(  \mathbb{A}_{a,b}\right)  }%
^{2}\leq C%
{\displaystyle\sum_{k,\ell}}
\left\Vert f_{k,\ell}\right\Vert _{H^{2}\left(  \mathbb{A}_{a,b}\right)  }%
^{2}L^{2k}\\
\leq C_{1}\left\Vert f\right\Vert _{H_{L}^{2}\left(  \mathcal{A}_{a,b}\right)
}^{2},\qquad\qquad\qquad\qquad\qquad\qquad
\end{gather*}
where $C,C_{1}>0.$ Similar estimate holds for $b.$

Let us put
\[
I_{r}:=%
{\displaystyle\int_{\mathbb{S}^{d-1}}}
{\displaystyle\int_{0}^{2\pi}}
\left\vert
{\displaystyle\sum_{k,\ell}}
f_{k,\ell}^{\ast}\left(  ae^{i\varphi}\right)  Y_{k,\ell}\left(
\theta\right)  -%
{\displaystyle\sum_{k,\ell}}
f_{k,\ell}\left(  re^{i\varphi}\right)  Y_{k,\ell}\left(  \theta\right)
\right\vert ^{2}d\theta d\varphi.
\]
We have to prove $I_{r}\longrightarrow0$ for $r\longrightarrow a.$ We have
obviously
\[
I_{r}\leq%
{\displaystyle\sum_{k,\ell}}
{\displaystyle\int_{0}^{2\pi}}
\left\vert f_{k,\ell}^{\ast}\left(  ae^{i\varphi}\right)  -f_{k,\ell}\left(
re^{i\varphi}\right)  \right\vert ^{2}d\varphi.
\]

We fix some $\varepsilon>0.$ There exists a $k_{1}\geq1$ such that for every
$k\geq k_{1}$ holds
\[%
{\displaystyle\sum_{k=k_{1}}^{\infty}}
{\displaystyle\sum_{\ell=1}^{a_{k}}}
\left\Vert f_{k,\ell}\right\Vert _{H^{2}\left(  \mathbb{A}_{a,b}\right)  }%
^{2}L^{2k}<\frac{\varepsilon}{2}.
\]
This implies
\[%
{\displaystyle\sum_{k=k_{1}}^{\infty}}
{\displaystyle\sum_{\ell=1}^{a_{k}}}
{\displaystyle\int_{0}^{2\pi}}
\left\vert f_{k,\ell}^{\ast}\left(  ae^{i\varphi}\right)  -f_{k,\ell}\left(
re^{i\varphi}\right)  \right\vert ^{2}d\varphi<C\frac{\varepsilon}{2}
\]
for some constant $C>0.$ On the other hand, we find $\delta>0$ such that for
$\left\vert r-a\right\vert <\delta$ holds
\[%
{\displaystyle\sum_{k=0}^{k_{1}-1}}
{\displaystyle\sum_{\ell=1}^{a_{k}}}
{\displaystyle\int_{0}^{2\pi}}
\left\vert f_{k,\ell}^{\ast}\left(  ae^{i\varphi}\right)  -f_{k,\ell}\left(
re^{i\varphi}\right)  \right\vert ^{2}d\varphi<C\frac{\varepsilon}{2}.
\]
Hence, we obtain $I_{r}\leq C\varepsilon$ which proves that $I_{r}%
\longrightarrow0$ for $r\longrightarrow a.$%

\endproof

Recall that for every $f\in H_{L}^{2}\left(  \mathcal{A}_{a,b}\right)  $ all
components $f_{k,\ell}\in H^{2,k}\left(  \mathbb{A}_{a,b}\right)  $ and they
have limiting functions $f_{k,\ell}^{\ast}\in L_{2}\left(  \partial
\mathbb{A}_{a,b}\right)  .$ Hence, by Proposition \ref{PSzego} we obtain
\[
f\left(  z,\theta\right)  =%
{\displaystyle\sum_{k,\ell}}
\left\langle K_{k}\left(  z,\cdot\right)  ,f_{k,\ell}^{\ast}\left(
\cdot\right)  \right\rangle _{H^{2}\left(  \mathbb{A}_{a,b}\right)  }%
Y_{k,\ell}\left(  \theta\right)  \qquad\text{for all } \left(z,\theta\right) \in
\mathcal{A}_{a,b}.
\]

\begin{defn}
\label{DcauchyTypeKernel} We define the\textbf{\ Cauchy type kernel} $K$ for
the space $H_{L}^{2}\left(  \mathcal{A}_{a,b}\right)  $ by putting
\begin{equation}
K\left(  z,\theta;\tau,\theta^{\prime}\right)  =\sum_{k=0}^{\infty}\sum
_{\ell=1}^{a_{k}}\frac{1}{L^{k}}K_{k}\left(  z,\tau\right)  Y_{k,\ell}\left(
\theta\right)  Y_{k,\ell}\left(  \theta^{\prime}\right)  . \label{kernelAll}%
\end{equation}

\end{defn}

The function $K\left(  z,\theta;\tau,\theta^{\prime}\right)  $ converges on
compact subsets of $\mathcal{A}_{a,b}$: Indeed, we use the estimates
$\left\vert Y_{k,\ell}\left(  \theta\right)  \right\vert \leq C_{1}%
k^{\frac{d-2}{2}},$ and $a_{k}\leq C_{2}k^{d-2},$ cf. \cite{seeley}. By
Theorem \ref{TkernelKk} we obtain the estimate
\[
\left\vert K\left(  z,\theta;\tau,\theta^{\prime}\right)  \right\vert \leq
C_{3,\varepsilon}\sum_{k=0}^{\infty}\frac{k^{d-2}k^{d-2}}{L^{k}}<\infty
\]
for all $a+\varepsilon\leq\left\vert z\right\vert \leq b-\varepsilon$ and
$\left\vert \tau\right\vert =a$ or $\left\vert \tau\right\vert =b.$

Finally, we generalize the classical one-dimensional Cauchy formula, and
justify the name "Cauchy type kernel" given to $K\left(  z,\theta;\tau
,\theta^{\prime}\right)  $ in Definition \ref{DcauchyTypeKernel}.

\begin{thm}
The kernel $K\left(  \zeta,\theta;z,\theta^{\prime}\right)  $ is a Cauchy type
kernel for the space $H_{L}^{2}\left(  \mathcal{A}_{a,b}\right)  $ defined by
the inner product (\ref{HKspace}), i.e. for every $f\in H_{L}^{2}\left(
\mathcal{A}_{a,b}\right)  $ holds
\begin{equation}
\left\langle K\left(  z,\theta;z^{\prime},\theta^{\prime}\right)  ,f^{\ast
}\left(  z^{\prime},\theta^{\prime}\right)  \right\rangle _{H_{L}^{2}%
}=f\left(  z,\theta\right)  ,\qquad\text{for all }\left(  z,\theta\right)
\in\mathcal{A}_{a,b}, \label{reprod}%
\end{equation}
where $f^{\ast}$ is the boundary limit at $\partial\mathcal{A}_{a,b}$ of the
function $f.$
\end{thm}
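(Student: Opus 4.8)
The plan is to reduce the $d$-dimensional reproducing identity to the one-dimensional Cauchy formula of Proposition~\ref{PSzego}, component by component in the spherical variable, and then to upgrade from the dense subspace $\mathcal{V}$ of finite sums (\ref{V}) to all of $H_L^2(\mathcal{A}_{a,b})$ by continuity. A preliminary point worth flagging is that $K(z,\theta;\cdot,\cdot)$ is a \emph{Cauchy} kernel assembled from the $K_k$, not the Szeg\H{o} reproducing kernel, so that for fixed $(z,\theta)$ it need not itself lie in $H_L^2(\mathcal{A}_{a,b})$; accordingly I would read the pairing in (\ref{reprod}) through the boundary expression (\ref{HKspace}), that is, as the weighted sum over spherical components of the one-dimensional Cauchy pairings on $\partial\mathbb{A}_{a,b}$ supplied by Proposition~\ref{PSzego}.

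First I would fix $(z,\theta)\in\mathcal{A}_{a,b}$ and take $f\in\mathcal{V}$, so that $f^{\ast}(z',\theta')=\sum_{k,\ell} f_{k,\ell}^{\ast}(z')\,Y_{k,\ell}(\theta')$ is a finite sum with each $f_{k,\ell}\in V_{k,d}$. Inserting the definition (\ref{kernelAll}) of $K$ and integrating in the sphere variable $\theta'$, the orthonormality of the system $Y_{k,\ell}$ on $\mathbb{S}^{d-1}$ collapses the double spherical sum to its diagonal, so that the $(k,\ell)$-term of $K$ is paired only with the matching radial component $f_{k,\ell}^{\ast}$. By the definition (\ref{scalarHK}) of the weighted inner product the result takes the form
\[
\sum_{k,\ell} Y_{k,\ell}(\theta)\,\big\langle K_k(z,\cdot),\,f_{k,\ell}^{\ast}(\cdot)\big\rangle_{H^{2}(\mathbb{A}_{a,b})},
\]
where the radial normalization carried by $K$ in (\ref{kernelAll}) is chosen precisely to absorb the spherical weight produced by (\ref{scalarHK}). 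Proposition~\ref{PSzego} now evaluates each one-dimensional pairing as $\langle K_k(z,\cdot),f_{k,\ell}^{\ast}\rangle_{H^{2}(\mathbb{A}_{a,b})}=f_{k,\ell}(z)$, and summing the (finitely many) surviving terms returns $\sum_{k,\ell} f_{k,\ell}(z)\,Y_{k,\ell}(\theta)=f(z,\theta)$, which is (\ref{reprod}) for $f\in\mathcal{V}$.

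To pass to a general $f\in H_L^2(\mathcal{A}_{a,b})$ I would approximate it in norm by a sequence $f^{n}\in\mathcal{V}$, which is possible since $\mathcal{V}$ is dense by construction. The right-hand side of (\ref{reprod}) is continuous in $f$ because the Maximum principle of Theorem~\ref{THKisHilbert} makes point evaluation at $(z,\theta)$ a bounded functional on $H_L^2(\mathcal{A}_{a,b})$; the left-hand side is continuous because, by the uniform bound on the Cauchy kernels in Theorem~\ref{TkernelKk} together with the $L_2$ boundary convergence established in the Proposition preceding Definition~\ref{DcauchyTypeKernel}, the boundary pairing depends continuously on the boundary data $f^{\ast}$. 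Passing to the limit then extends (\ref{reprod}) from $\mathcal{V}$ to all of $H_L^2(\mathcal{A}_{a,b})$.

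The step I expect to be the genuine obstacle is the justification of interchanging the infinite summation over $(k,\ell)$ with the boundary integration that defines the pairing: the series for $K$ converges only thanks to the decay coming from the normalization in (\ref{kernelAll}) weighed against the polynomial growth $a_k=O(k^{d-2})$ and $|Y_{k,\ell}|=O(k^{(d-2)/2})$, and this must be controlled uniformly while one simultaneously exploits the $L_2$ convergence of the Laplace--Fourier series of $f^{\ast}$. Handling these two limiting processes together --- the summation in $k$ and the boundary limits $r\to a$, $r\to b$ --- is where the estimates of Theorem~\ref{TkernelKk} and the preceding boundary-limit Proposition do the real work; once the interchange is legitimate, the algebraic identity reduces term by term to Proposition~\ref{PSzego} exactly as above.
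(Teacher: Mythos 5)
Your core argument coincides with the paper's: expand $K$ by (\ref{kernelAll}), use orthonormality of the $Y_{k,\ell}$ so that the componentwise inner product (\ref{scalarHK}) pairs the $(k,\ell)$-term of $K$ only with $f_{k,\ell}^{\ast}$, evaluate each one-dimensional pairing by Proposition \ref{PSzego}, and control the summation over $(k,\ell)$ via Theorem \ref{TkernelKk} together with the growth bounds $a_{k}=O(k^{d-2})$ and $\left\vert Y_{k,\ell}\right\vert =O(k^{(d-2)/2})$. That is exactly the content of the paper's one-sentence proof and of the identity $f\left(  z,\theta\right)  =\sum_{k,\ell}\left\langle K_{k}\left(  z,\cdot\right)  ,f_{k,\ell}^{\ast}\left(  \cdot\right)  \right\rangle _{H^{2}\left(  \mathbb{A}_{a,b}\right)  }Y_{k,\ell}\left(  \theta\right)  $ displayed just before Definition \ref{DcauchyTypeKernel}. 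Your one structural deviation --- first proving the formula on $\mathcal{V}$ and then extending by density and continuity --- is unnecessary: Proposition \ref{PSzego} is stated for arbitrary elements of $H^{2,k}\left(  \mathbb{A}_{a,b}\right)  $, not only for the polynomial space $V_{k,d}$, so the componentwise identity is available at once for every $f\in H_{L}^{2}\left(  \mathcal{A}_{a,b}\right)  $; the detour buys nothing and adds two obligations (density of $\mathcal{V}$ in $H_{L}^{2}$ and continuity of the kernel pairing) that the direct argument never incurs.

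The genuine problem is the one step you assert instead of compute: the claim that the factor $1/L^{k}$ in (\ref{kernelAll}) is "chosen precisely to absorb" the weight $L^{2k}$ in (\ref{scalarHK}). It is not. The $(k,\ell)$-component of $K\left(  z,\theta;\cdot,\cdot\right)  $ in the variable $\theta^{\prime}$ is $L^{-k}K_{k}\left(  z,\cdot\right)  Y_{k,\ell}\left(  \theta\right)  $, so the pairing carries the factor $L^{2k}\cdot L^{-k}=L^{k}$ and the computation yields
\[
\left\langle K\left(  z,\theta;\cdot,\cdot\right)  ,f^{\ast}\right\rangle
_{H_{L}^{2}}=\sum_{k,\ell}L^{k}f_{k,\ell}\left(  z\right)  Y_{k,\ell}\left(
\theta\right)  ,
\]
which differs from $f\left(  z,\theta\right)  $ in every component with $k\geq1$. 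For (\ref{reprod}) to hold, the kernel in Definition \ref{DcauchyTypeKernel} must carry the factor $L^{-2k}$ (equivalently, the weight in (\ref{scalarHK}) would have to be $L^{k}$). This defect is inherited from the paper itself, whose one-line proof glosses over the same computation, so your proposal does reproduce the intended argument; but a blind, component-by-component verification is precisely the place where this exponent mismatch should have been caught, and only after correcting it do your argument and the paper's both go through.
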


The proof follows by a direct application of Theorem \ref{TkernelKk}.

\section{Error estimate of the Polyharmonic Gauss-Jacobi Cubature formula in
the annulus \label{Scubature}}

The topic of estimation of quadrature formulas for analytic functions is a
widely studied one. Beyond the classical monographs \cite{krylov},
\cite{davisRabinowitz}, we provide further and more recent publications, as
\cite{bakhvalov}, \cite{gautschi}, \cite{goetz}, \cite{kzaz}, \cite{kowalski},
\cite{milova}. No references may be found though for the the Numerical
Integration in the multivariate case, which is often called cubature formulas,
even in the fundamental monographs as \cite{sobolev}, \cite{stroudBook},
\cite{sobolev2}; see also the recent survey \cite{cools}.

In order to explain our approach, first of all, we will recall the
\emph{polyharmonic Gauss-Jacobi cubature} \emph{formula} presented in
\cite{kounchevRenderArxiv}, \cite{kounchevRenderArkiv} for the annulus
$A_{a,b}\subset\mathbb{R}^{d}.$ Let us fix some integers $N\geq1$ and $k\geq0.
$ We consider the following $2N-$dimensional subspace of $V_{k,d}$ in
(\ref{Vkn}),
\[
V_{k,d,N}:=\operatorname*{span}\left\{  \left\{  r^{2j+k}\right\}
_{j=0}^{N-1},\ \left\{  r^{-d-k+2+2j}\right\}  _{j=0}^{N-1}\right\}  .
\]
We remark that this is a Chebyshev system of order $2N,$ cf.
\cite{kreinNudelman}.

Let the \emph{pseudo-positive} (signed) measure $d\mu$ be given in the annulus
$A_{a,b},$ see (\ref{defpspos}). For all indices $\left(  k,\ell\right)  $ the
component measures are defined by
\begin{equation}
d\mu_{k,\ell}\left(  r\right)  :=%
{\displaystyle\int_{\mathbb{S}^{d-1}}}
Y_{k,\ell}\left(  \theta\right)  d\mu\left(  r\theta\right)  \geq
0\qquad\text{for all }r\in\left[  a,b\right]  ; \label{dmukl}%
\end{equation}
here the integral is symbolical with respect to the variables $\theta.$
Rigorously, the component measure $d\mu_{k,\ell}\left(  r\right)  $ is defined
for the functions $g\left(  r\right)  $ on the interval $\left[  a,b\right]  $
by means of the equality
\[%
{\displaystyle\int_{a}^{b}}
g\left(  r\right)  d\mu_{k,\ell}\left(  r\right)  :=%
{\displaystyle\int_{A_{a,b}}}
g\left(  r\right)  Y_{k,\ell}\left(  \theta\right)  d\mu\left(  x\right)  ;
\]
cf. \cite{kounchevRenderArxiv}, \cite{kounchevRenderArkiv}.

Let us fix $\left(  k,\ell\right)  .$ Recall from \cite{kounchevRenderArxiv},
\cite{kounchevRenderArkiv}, that we consider truncated Moment problem and
related Gauss-Jacobi type Quadrature formula corresponding to the operator
$\Delta^{2N}.$ Thus, there exist points $t_{k,\ell;j},$ $j=1,2,...,2N,$
belonging to the interval $\left[  a,b\right]  ,$ and non-negative numbers
$\left\{  \lambda_{k,\ell;j}\right\}  _{j=1}^{2N},$ such that the following
quadrature formula holds:
\begin{align}%
{\displaystyle\int_{a}^{b}}
Q\left(  t\right)  d\mu_{k,\ell}\left(  t\right)   &  =%
{\displaystyle\int_{a}^{b}}
Q\left(  t\right)  d\mu_{k,\ell}^{G}\left(  t\right) \label{Quadraturekl}\\
&  =%
{\displaystyle\sum_{j=1}^{N}}
\lambda_{k,\ell;j}Q\left(  t_{k,\ell;j}\right)  \qquad\text{for every }Q\in
V_{k,d,2N},\nonumber
\end{align}
cf. \cite{kreinNudelman}, Theorem $4.1,$ chapter $4.$ We have put
\begin{equation}
d\mu_{k,\ell}^{G}=%
{\displaystyle\sum_{j=1}^{N}}
\lambda_{k,\ell;j}\delta\left(  t-t_{k,\ell;j}\right)  . \label{dmuG}%
\end{equation}

First of all, we will find the error for the above Quadrature formulas
(\ref{Quadraturekl}) following the classical scheme outlined in
\cite{davisRabinowitz}, p. $231-235$ (see also chapter $12$ in \cite{krylov}).

Let $H_{k,\ell;2N}\left[  f\right]  \left(  t\right)  $ be the unique element
of $V_{k,d,N}$ which interpolates the function $f$ at the points $t_{k,\ell
;j}, $ i.e.
\begin{equation}
H_{k,\ell;2N}\left[  f\right]  \left(  t_{k,\ell;j}\right)  =f\left(
t_{k,\ell;j}\right)  \qquad\text{for }j=1,2,...,2N. \label{Hkl2Ninterpolate}%
\end{equation}
On the other hand, for every function $f\in H^{2,k}\left(  \mathbb{A}%
_{a,b}\right)  $ by Proposition \ref{PSzego} we have the Cauchy type formula
\[
f\left(  z\right)  =\frac{1}{2\pi i}%
{\displaystyle\int_{\partial\mathbb{A}_{a,b}}}
K_{k}\left(  z,\tau\right)  f^{\ast}\left(  \tau\right)  \frac{1}{\tau}d\tau,
\]
where $f^{\ast}$ is the limiting value of $f$ on the boundary $\partial
\mathbb{A}_{a,b}.$ Hence, we obtain
\[
H_{k,\ell;2N}\left[  f\right]  \left(  z\right)  =\frac{1}{2\pi i}%
{\displaystyle\int_{\partial\mathbb{A}_{a,b}}}
H_{k,\ell;2N}\left[  K_{k}\left(  \cdot,\tau\right)  \right]  \left(
z\right)  f^{\ast}\left(  \tau\right)  \frac{1}{\tau}d\tau;
\]
here for every fixed $\tau,$ by $H_{k,\ell;2N}\left[  K_{k}\left(  \cdot
,\tau\right)  \right]  \left(  z\right)  $ we denote the element in
$V_{k,d,N}$ which interpolates the kernel function $K_{k}\left(
z,\tau\right)  $ at $z=t_{k,\ell;j}$ for $j=1,2,...,2N.$ This gives us the
remainder formula of Cauchy type for the interpolation (\ref{Hkl2Ninterpolate}%
),
\begin{align}
f\left(  z\right)   &  =H_{k,\ell;2N}\left[  f\right]  \left(  z\right)
\label{fklRemainder}\\
&  +\frac{1}{2\pi i}%
{\displaystyle\int_{\partial\mathbb{A}_{a,b}}}
\left\{  K_{k}\left(  z,\tau\right)  -H_{k,\ell;2N}\left[  K_{k}\left(
\cdot,\tau\right)  \right]  \left(  z\right)  \right\}  f^{\ast}\left(
\tau\right)  \frac{1}{\tau}d\tau.\nonumber
\end{align}

Now we are able to estimate the error functional:
\begin{equation}
E_{k,\ell}\left[  f\right]  :=%
{\displaystyle\int_{a}^{b}}
f\left(  t\right)  d\mu_{k,\ell}\left(  t\right)  -%
{\displaystyle\int_{a}^{b}}
f\left(  t\right)  d\mu_{k,\ell}^{G}\left(  t\right)  . \label{Errorf}%
\end{equation}
Indeed, by (\ref{fklRemainder}) we have
\begin{gather*}%
{\displaystyle\int_{a}^{b}}
f\left(  z\right)  d\mu_{k,\ell}\left(  z\right)  =%
{\displaystyle\int_{a}^{b}}
H_{k,\ell;2N}\left[  f\right]  \left(  z\right)  d\mu_{k,\ell}\left(
z\right)  \qquad\qquad\qquad\qquad\qquad\qquad\qquad\qquad\\
\qquad+\frac{1}{2\pi i}%
{\displaystyle\int_{\partial\mathbb{A}_{a,b}}}
{\displaystyle\int_{a}^{b}}
\left\{  K_{k}\left(  z,\tau\right)  -H_{k,\ell;2N}\left[  K_{k}\left(
\cdot,\tau\right)  \right]  \left(  z\right)  \right\}  d\mu_{k,\ell}\left(
z\right)  f^{\ast}\left(  \tau\right)  \frac{1}{\tau}d\tau
\end{gather*}
On the other hand, by (\ref{Quadraturekl}) and (\ref{Hkl2Ninterpolate}) we
obtain
\begin{align*}%
{\displaystyle\int_{a}^{b}}
H_{k,\ell;2N}\left[  f\right]  \left(  z\right)  d\mu_{k,\ell}\left(
z\right)   &  =%
{\displaystyle\int_{a}^{b}}
H_{k,\ell;2N}\left[  f\right]  \left(  z\right)  d\mu_{k,\ell}^{G}\left(
z\right) \\
&  =%
{\displaystyle\int_{a}^{b}}
f\left(  z\right)  d\mu_{k,\ell}^{G}\left(  z\right)
\end{align*}
which implies
\begin{align}
E_{k,\ell}\left[  f\right]   &  =%
{\displaystyle\int_{a}^{b}}
f\left(  t\right)  d\mu_{k,\ell}\left(  t\right)  -%
{\displaystyle\int_{a}^{b}}
f\left(  t\right)  d\mu_{k,\ell}^{G}\left(  t\right) \label{Eklf=}\\
&  =\frac{1}{2\pi i}%
{\displaystyle\int_{\partial\mathbb{A}_{a,b}}}
{\displaystyle\int_{a}^{b}}
\left\{  K_{k}\left(  z,\tau\right)  -H_{k,\ell;2N}\left[  K_{k}\left(
\cdot,\tau\right)  \right]  \left(  z\right)  \right\}  d\mu_{k,\ell}\left(
z\right)  f^{\ast}\left(  \tau\right)  \frac{1}{\tau}d\tau\nonumber
\end{align}
and we see that in the right-hand side we do not have derivatives of the
function $f$ but only its values on the two circles $\partial\mathbb{A}%
_{a,b}.$ Now we may prove the following Lemma.

\begin{lem}
\label{LCk} Let the knots $\left\{  t_{k,\ell;j}\right\}  _{j=1}^{2N}$ of the
quadrature formula (\ref{Quadraturekl}) lie in the interval $\left[
a,b\right]  \subset\left(  a^{\prime},b^{\prime}\right)  .$ Then for every
function $f$ in the component Hardy space $H^{2,k}\left(  \mathbb{A}%
_{a^{\prime},b^{\prime}}\right)  ,$ the error of the quadrature formula
(\ref{Quadraturekl}) satisfies
\begin{equation}
\left\vert E_{k,\ell}\left[  f\right]  \right\vert =\left\vert
{\displaystyle\int_{a}^{b}}
f\left(  t\right)  d\mu_{k,\ell}\left(  t\right)  -%
{\displaystyle\int_{a}^{b}}
f\left(  t\right)  d\mu_{k,\ell}^{G}\left(  t\right)  \right\vert \leq
C_{k}\left\Vert f\right\Vert _{H^{2}\left(  \mathbb{A}_{a^{\prime},b^{\prime}%
}\right)  }. \label{Ck}%
\end{equation}

\end{lem}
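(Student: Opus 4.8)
The plan is to start from the Cauchy-type remainder representation (\ref{Eklf=}) of the error functional and to reduce the estimate to a uniform bound on the $\tau$-integrand, which I then pair with $f^{\ast}$ through the Cauchy--Bunyakovsky--Schwarz inequality. Since $f\in H^{2,k}(\mathbb{A}_{a^{\prime},b^{\prime}})$, I apply the Cauchy formula of Proposition \ref{PSzego} on the \emph{larger} annulus $\mathbb{A}_{a^{\prime},b^{\prime}}$, so that the identity (\ref{Eklf=}) holds with the contour $\partial\mathbb{A}_{a^{\prime},b^{\prime}}$ and with $f^{\ast}$ the boundary value of $f$ there. Introducing the auxiliary function
\[
G_{k,\ell}(\tau):=\int_a^b\left\{K_k(z,\tau)-H_{k,\ell;2N}\left[K_k(\cdot,\tau)\right](z)\right\}d\mu_{k,\ell}(z),
\]
the error takes the compact form $E_{k,\ell}[f]=\frac{1}{2\pi i}\int_{\partial\mathbb{A}_{a^{\prime},b^{\prime}}}G_{k,\ell}(\tau)f^{\ast}(\tau)\frac{1}{\tau}\,d\tau$, and the whole statement is reduced to showing that $G_{k,\ell}$ is bounded on the two circles $\left\vert\tau\right\vert=a^{\prime}$ and $\left\vert\tau\right\vert=b^{\prime}$ by a constant $C_k$ that is independent of $f$.

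The main work, and the step I expect to be the only genuine obstacle, is this uniform bound on $G_{k,\ell}$. First, since the support of $\mu_{k,\ell}$ and all knots $t_{k,\ell;j}$ lie in $[a,b]\subset(a^{\prime},b^{\prime})$, I may fix $\varepsilon\le(b^{\prime}-a^{\prime})/3$ small enough that $[a,b]\subset(a^{\prime}+\varepsilon,b^{\prime}-\varepsilon)$; then Theorem \ref{TkernelKk}, applied to the annulus $\mathbb{A}_{a^{\prime},b^{\prime}}$, gives $\left\vert K_k(z,\tau)\right\vert\le C_\varepsilon$ for every $z\in[a,b]$ and every $\tau$ with $\left\vert\tau\right\vert=a^{\prime}$ or $\left\vert\tau\right\vert=b^{\prime}$, where $C_\varepsilon$ is independent of $k$ and of $\tau$. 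Secondly, for each fixed $\tau$ the interpolant $H_{k,\ell;2N}[K_k(\cdot,\tau)]$ is the unique element of the $2N$-dimensional Chebyshev system $V_{k,d,N}$ matching $K_k(\cdot,\tau)$ at the fixed (distinct) knots; expressing it in the Lagrange-type fundamental basis of $V_{k,d,N}$ attached to these knots, its value at any $z\in[a,b]$ is a linear combination of the samples $K_k(t_{k,\ell;j},\tau)$ with coefficients that are bounded on $[a,b]$ and do not depend on $\tau$. Hence $\left\vert H_{k,\ell;2N}[K_k(\cdot,\tau)](z)\right\vert\le C_k^{\prime}C_\varepsilon$ for all $z\in[a,b]$ and all $\tau$ on the two circles. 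Integrating the resulting bounded integrand against the finite positive measure $\mu_{k,\ell}$ (positivity from (\ref{dmukl})) then yields $\left\vert G_{k,\ell}(\tau)\right\vert\le C_k$, with $C_k$ absorbing $C_\varepsilon$, the operator norm of the interpolation on $V_{k,d,N}$ and the total mass $\mu_{k,\ell}([a,b])$.

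It remains to pair this bound with $f^{\ast}$. Parametrizing $\partial\mathbb{A}_{a^{\prime},b^{\prime}}$ by $\tau=b^{\prime}e^{i\varphi}$ and $\tau=a^{\prime}e^{i\varphi}$, so that $\frac{1}{\tau}\,d\tau=\pm i\,d\varphi$, the triangle inequality gives
\[
\left\vert E_{k,\ell}[f]\right\vert\le\frac{C_k}{2\pi}\int_0^{2\pi}\left\vert f^{\ast}(b^{\prime}e^{i\varphi})\right\vert d\varphi+\frac{C_k}{2\pi}\int_0^{2\pi}\left\vert f^{\ast}(a^{\prime}e^{i\varphi})\right\vert d\varphi.
\]
Applying the Cauchy--Bunyakovsky--Schwarz inequality to each integral bounds it by a fixed multiple of $\left\Vert f_{b^{\prime}}^{\ast}\right\Vert_{L_2(\mathbb{S}^1)}$, respectively $\left\Vert f_{a^{\prime}}^{\ast}\right\Vert_{L_2(\mathbb{S}^1)}$, and by the boundary characterization of the norm in Remark \ref{RinnerProduct} each of these is at most $\left\Vert f\right\Vert_{H^{2}(\mathbb{A}_{a^{\prime},b^{\prime}})}$. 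Collecting all constants into $C_k$ yields the claimed estimate (\ref{Ck}); note that the representation (\ref{Eklf=}) already holds for every $f\in H^{2,k}(\mathbb{A}_{a^{\prime},b^{\prime}})$ by Proposition \ref{PSzego}, so no further approximation step is needed.
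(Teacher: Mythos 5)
Your proposal is correct and follows essentially the same route as the paper's own proof: both start from the remainder representation (\ref{Eklf=}) taken over the larger annulus $\mathbb{A}_{a^{\prime},b^{\prime}}$, reduce everything to a uniform bound $\left\vert G_{k,\ell}(\tau)\right\vert\leq C_{k}$ on $\partial\mathbb{A}_{a^{\prime},b^{\prime}}$ via Theorem \ref{TkernelKk}, and finish with the Cauchy--Bunyakovsky--Schwarz inequality against $f^{\ast}$. Your write-up is in fact more explicit than the paper's (which only asserts continuity of $\tau\mapsto H_{k,\ell;2N}\left[K_{k}(\cdot,\tau)\right]$ and boundedness), since you justify the uniform bound on the interpolant through the Lagrange-type fundamental basis of the Chebyshev system $V_{k,d,N}$ and invoke the positivity and finiteness of $\mu_{k,\ell}$.
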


%

\proof
Since $\left\{  t_{k,\ell;j}\right\}  _{j=1}^{2N}\in\left[  a,b\right]
\subset\left(  a^{\prime},b^{\prime}\right)  $ it follows that the
interpolation operator $H_{k,\ell;2N}\left[  K_{k}\left(  \cdot,\tau\right)
\right]  $ depends continuously on $\tau\in\partial\mathbb{A}_{a^{\prime
},b^{\prime}}.$ Hence, the function
\[%
{\displaystyle\int_{a}^{b}}
\left\{  K_{k}\left(  z,\tau\right)  -H_{k,\ell;2N}\left[  K_{k}\left(
\cdot,\tau\right)  \right]  \left(  z\right)  \right\}  d\mu_{k,\ell}\left(
z\right)
\]
is also continuous in $\tau\in\partial\mathbb{A}_{a^{\prime},b^{\prime}}$ and
by Theorem \ref{TkernelKk} its modulus is bounded, namely
\[
\left\vert
{\displaystyle\int_{a}^{b}}
\left\{  K_{k}\left(  z,\tau\right)  -H_{k,\ell;2N}\left[  K_{k}\left(
\cdot,\tau\right)  \right]  \left(  z\right)  \right\}  d\mu_{k,\ell}\left(
z\right)  \right\vert \leq C_{k}\qquad\text{for }\tau\in\partial
\mathbb{A}_{a^{\prime},b^{\prime}}.
\]
By application of Cauchy-Bunyakovksi-Schwarz inequality, from (\ref{Eklf=}),
we obtain for some constant $C>0$ the inequality
\[
\left\vert
{\displaystyle\int}
f\left(  t\right)  d\mu_{k,\ell}\left(  t\right)  -%
{\displaystyle\int}
f\left(  t\right)  d\mu_{k,\ell}^{G}\left(  t\right)  \right\vert \leq
CC_{k}\left\Vert f\right\Vert _{H^{2}\left(  \mathbb{A}_{a^{\prime},b^{\prime
}}\right)  }%
\]
which ends the proof.%

\endproof

Finally, we may proceed to the \emph{polyharmonic Gauss-Jacobi Cubature
formula} for the pseudo-positive measure $d\mu.$ We will consider this in a
simpler setting, for functions $f$ having a \textbf{finite} Laplace-Fourier
expansion:
\begin{equation}
f\left(  x\right)  =f\left(  r\theta\right)  =%
{\displaystyle\sum_{k,\ell,\ k\leq k_{0}}}
f_{k,\ell}\left(  r\right)  Y_{k,\ell}\left(  \theta\right)  . \label{ffinite}%
\end{equation}
The polyharmonic Gauss-Jacobi measure $d\mu^{G}\left(  x\right)  $ is defined
as the pseudo-positive measure having component measures (\ref{dmukl}) equal
to $d\mu_{k,\ell}^{G}$ in (\ref{dmuG}). Written symbolically, we have the
expansion
\begin{equation}
d\mu^{G}\left(  x\right)  =%
{\displaystyle\sum_{k,\ell}}
Y_{k,\ell}\left(  \theta\right)  d\mu_{k,\ell}^{G}\left(  r\right)  .
\label{dmuGAll}%
\end{equation}
The corresponding Cubature formula is simple to formulate if the function $f$
defined in $A_{a,b}$ has a finite expansion as in (\ref{ffinite}):
\begin{equation}%
{\displaystyle\int_{A_{a,b}}}
f\left(  x\right)  d\mu^{G}\left(  x\right)  =%
{\displaystyle\sum_{k,\ell,\ k\leq k_{0}}}
{\displaystyle\int_{a}^{b}}
f_{k,\ell}\left(  r\right)  d\mu_{k,\ell}^{G}\left(  r\right)  .
\label{CubatureAll}%
\end{equation}
It is direct to see from the quadrature formulas (\ref{Quadraturekl}), that
for a function $u\left(  x\right)  $ which is a finite sum of the type
(\ref{uxAlmansiAnnulus}) with $\Delta^{2N}u\left(  x\right)  =0$ in $A_{a,b},$
holds
\[%
{\displaystyle\int_{A_{a,b}}}
u\left(  x\right)  d\mu\left(  x\right)  =%
{\displaystyle\int_{A_{a,b}}}
u\left(  x\right)  d\mu^{G}\left(  x\right)  ,
\]
cf. \cite{kounchevRenderArxiv}, \cite{kounchevRenderArkiv}. This justifies the
name \emph{polyharmonic cubature formula}. Hence, we define the Cubature
formula by putting
\begin{equation}
C_{N}\left(  f\right)  :=%
{\displaystyle\int_{A_{a,b}}}
f\left(  x\right)  d\mu^{G}\left(  x\right)  . \label{CNdefined}%
\end{equation}

We will prove the estimate of the cubature formula in the following.

\begin{thm}
\label{TcubatureEstimate} Let $d\mu$ be a pseudo-positive measure, and
$C_{N}\left(  f\right)  $ be the polyharmonic Gauss-Jacobi Cubature formula
defined by (\ref{CNdefined}) in the annulus $A_{a,b}.$ Let $a^{\prime}<a$ and
$b<b^{\prime},$ and we assume that the function $f\in H_{L}^{2}\left(
\mathcal{A}_{a^{\prime},b^{\prime}}\right)  $ and has a finite representation
(\ref{ffinite}) for some $k_{0}\geq0.$

Then the error of the Cubature formula (\ref{CubatureAll}) is estimated by
\begin{align*}
\left\vert E\left[  f\right]  \right\vert  &  =\left\vert
{\displaystyle\int_{A_{a,b}}}
f\left(  x\right)  d\mu\left(  x\right)  -C_{N}\left(  f\right)  \right\vert
\\
&  \leq\sqrt{%
{\displaystyle\sum_{k,\ell,\ k\leq k_{0}}}
\frac{C_{k}^{2}}{L^{2k}}}\times\left\Vert f\right\Vert _{H_{L}^{2}\left(
\mathcal{A}_{a^{\prime},b^{\prime}}\right)  }^{2}.
\end{align*}

\end{thm}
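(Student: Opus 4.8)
The plan is to reduce the multivariate cubature error to a finite sum of the one-dimensional component errors already controlled by Lemma \ref{LCk}, and then to recombine these one-dimensional estimates through a weighted Cauchy--Bunyakovsky--Schwarz inequality that exactly reproduces the $H_{L}^{2}$ norm. The first step is to exploit the orthonormality of the spherical harmonics together with the defining relation of the component measures. Since $f$ has the finite expansion (\ref{ffinite}) and $d\mu_{k,\ell}$ is defined by $\int_{a}^{b}g(r)\,d\mu_{k,\ell}(r)=\int_{A_{a,b}}g(r)Y_{k,\ell}(\theta)\,d\mu(x)$, integrating $f$ against $d\mu$ termwise gives
\[
\int_{A_{a,b}}f(x)\,d\mu(x)=\sum_{k,\ell,\ k\leq k_{0}}\int_{a}^{b}f_{k,\ell}(r)\,d\mu_{k,\ell}(r).
\]
Applying the same computation to the Gauss--Jacobi measure $d\mu^{G}$ with its component measures $d\mu_{k,\ell}^{G}$ produces precisely the cubature formula (\ref{CubatureAll}). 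Subtracting, the total error splits into a finite sum of the component error functionals (\ref{Errorf}),
\[
E[f]=\sum_{k,\ell,\ k\leq k_{0}}E_{k,\ell}[f_{k,\ell}].
\]
Because the expansion is finite, the interchange of summation and integration is trivial and no convergence question arises.

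Next, I would invoke Lemma \ref{LCk} on each summand. Since $a'<a<b<b'$, the knots $\{t_{k,\ell;j}\}$ lie in $[a,b]\subset(a',b')$ and each component $f_{k,\ell}$ belongs to $H^{2,k}(\mathbb{A}_{a',b'})$, so the lemma applies uniformly in $(k,\ell)$ and yields
\[
\bigl|E_{k,\ell}[f_{k,\ell}]\bigr|\leq C_{k}\,\|f_{k,\ell}\|_{H^{2}(\mathbb{A}_{a',b'})}.
\]

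Finally I would insert the factor $L^{k}L^{-k}$ and apply Cauchy--Bunyakovsky--Schwarz:
\[
|E[f]|\leq\sum_{k,\ell,\ k\leq k_{0}}\frac{C_{k}}{L^{k}}\,L^{k}\,\|f_{k,\ell}\|_{H^{2}(\mathbb{A}_{a',b'})},
\]
hence
\[
|E[f]|\leq\left(\sum_{k,\ell,\ k\leq k_{0}}\frac{C_{k}^{2}}{L^{2k}}\right)^{1/2}\left(\sum_{k,\ell,\ k\leq k_{0}}L^{2k}\,\|f_{k,\ell}\|_{H^{2}(\mathbb{A}_{a',b'})}^{2}\right)^{1/2}.
\]
The second factor is exactly $\|f\|_{H_{L}^{2}(\mathcal{A}_{a',b'})}$ by the definition (\ref{HKspace}) of the weighted norm, which closes the estimate. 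There is no deep obstacle here, only bookkeeping: the one genuine point of care is tracking the two annuli and confirming that the quadrature knots stay inside $(a',b')$, so that Lemma \ref{LCk} (whose constant $C_{k}$ depends on the enveloping annulus via Theorem \ref{TkernelKk}) may be used uniformly. The weighted Cauchy--Schwarz step is the decisive device that converts the unweighted component bounds into the $H_{L}^{2}$ norm, and it is precisely the factor $L^{-k}$ accompanying $C_{k}$ that makes the prefactor $\bigl(\sum C_{k}^{2}L^{-2k}\bigr)^{1/2}$ finite.
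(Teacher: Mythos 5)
Your proposal is correct and follows essentially the same route as the paper's own proof: termwise reduction of $E[f]$ to the finite sum of component errors $E_{k,\ell}[f_{k,\ell}]$, application of Lemma \ref{LCk} to each term, and the weighted Cauchy--Bunyakovsky--Schwarz step via the inserted factor $L^{k}L^{-k}$. In fact your final bound, with $\left\Vert f\right\Vert _{H_{L}^{2}\left(  \mathcal{A}_{a^{\prime},b^{\prime}}\right)  }$ appearing to the first power, is what Cauchy--Schwarz actually yields; the exponent $2$ in the paper's statement and proof is evidently a typographical slip.
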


%

\proof
Due to the finite expansion, we obtain the equality
\[%
{\displaystyle\int_{A_{a,b}}}
f\left(  x\right)  d\mu\left(  x\right)  =%
{\displaystyle\sum_{k,\ell,\ k\leq k_{0}}}
{\displaystyle\int_{a}^{b}}
f_{k,\ell}\left(  r\right)  d\mu_{k,\ell}\left(  r\right)  .
\]
On the other hand, by the Cubature formula (\ref{CubatureAll}) we have
\[%
{\displaystyle\int_{A_{a,b}}}
f\left(  x\right)  d\mu^{G}\left(  x\right)  =%
{\displaystyle\sum_{k,\ell,\ k\leq k_{0}}}
{\displaystyle\int_{a}^{b}}
f_{k,\ell}\left(  r\right)  d\mu_{k,\ell}^{G}\left(  r\right)  .
\]
Hence, we obtain the error functional
\begin{align*}
E\left[  f\right]   &  =%
{\displaystyle\int_{A_{a,b}}}
f\left(  x\right)  d\mu\left(  x\right)  -%
{\displaystyle\int_{A_{a,b}}}
f\left(  x\right)  d\mu^{G}\left(  x\right) \\
&  =%
{\displaystyle\sum_{k,\ell,\ k\leq k_{0}}}
\left(
{\displaystyle\int_{a}^{b}}
f_{k,\ell}\left(  r\right)  d\mu_{k,\ell}\left(  r\right)  -%
{\displaystyle\int_{a}^{b}}
f_{k,\ell}\left(  r\right)  d\mu_{k,\ell}^{G}\left(  r\right)  \right)
\end{align*}
and by Lemma \ref{LCk} we obtain
\begin{align*}
\left\vert E\left[  f\right]  \right\vert  &  =\left\vert
{\displaystyle\int_{A_{a,b}}}
f\left(  x\right)  d\mu\left(  x\right)  -%
{\displaystyle\int_{A_{a,b}}}
f\left(  x\right)  d\mu^{G}\left(  x\right)  \right\vert \\
&  \leq%
{\displaystyle\sum_{k,\ell,\ k\leq k_{0}}}
C_{k}\left\Vert f_{k,\ell}\right\Vert _{H^{2}\left(  \mathbb{A}_{a^{\prime
},b^{\prime}}\right)  }=%
{\displaystyle\sum_{k,\ell,\ k\leq k_{0}}}
\frac{C_{k}}{L^{k}}\left\Vert f_{k,\ell}\right\Vert _{H^{2}\left(
\mathbb{A}_{a^{\prime},b^{\prime}}\right)  }L^{k}\\
&  \leq\sqrt{%
{\displaystyle\sum_{k,\ell,\ k\leq k_{0}}}
\frac{C_{k}^{2}}{L^{2k}}}\times\left\Vert f\right\Vert _{H_{L}^{2}\left(
\mathcal{A}_{a^{\prime},b^{\prime}}\right)  }^{2}.
\end{align*}
This ends the proof.%

\endproof

For practical purposes it is reasonable to work with functions having finite
expansion (\ref{ffinite}). However, from Theorem \ref{TcubatureEstimate} we
see that the asymptotic behavior of the constants $C_{k}$ defined in
(\ref{Ck}) is important for the estimate of the error functional $E\left[
f\right]  $ for functions $f$ which do not have a finite expansion
(\ref{ffinite}). This is a subtle task of Interpolation theory which is beyond
the scope of the present paper.

Another important remark is that the cubature formula (\ref{CNdefined}) may be
easily extended, although in a non-unique manner, to signed measures $\mu$
having bounded variation. Indeed, we may represent the measure $\mu$ as a
difference, $\mu=\mu^{1}-\mu^{2},$ where $\mu^{1},$ $\mu^{2}$ have bounded
variation. We can do this representation componentwise (non-uniquely) by
$\mu_{k,\ell}=\mu_{k,\ell}^{1}-\mu_{k,\ell}^{2}$ with measures $d\mu_{k,\ell
}^{1},d\mu_{k,\ell}^{2}\geq0.$ Then we find the polyharmonic Gau\ss -Jacobi
cubature measures $\mu^{1,G}$ and $\mu^{2,G}$ and put $C_{N}\left(  f\right)
=%
{\displaystyle\int}
fd\mu^{1,G}-%
{\displaystyle\int}
fd\mu^{2,G}$ for the cubature formula.

Our research may be considered as a contribution to the topic of analytic
continuation of solutions to elliptic equations (in particular, harmonic
functions), see the discussion and references to the works of V. Avanissian,
P. Lelong, C. Kiselman, J. Siciak, M. Jarnicki, T. du Cros, on p. $54-55$ in
\cite{aron}, \cite{avanissian}, \cite{Mori98}, \cite{FuMo02}, \cite{KoRe08},
\cite{KounchevRenderPoly}. On the other hand, the concept of
\emph{polyharmonic Hardy spaces} which we introduce appears to be a new
multivariate concept of Hardy space which differs from the existing
approaches, cf. \cite{stein}, \cite{steinWeiss}, \cite{rudin70},
\cite{rudin80}, \cite{coifman}, \cite{sarason1998}, \cite{Shai03}. This notion
will be given a thorough study in a planned monograph
\cite{kounchevRenderBook}. Another setting where the polyharmonic Gauss-Jacobi
cubature formulas and Hardy space $H_{L}^{2}$ may be naturally developed, is
the strip $\left[  a,b\right]  \times\mathbb{R}^{d},$ cf.
\cite{kounchevRenderArxiv}, \cite{kounchevRenderBook}.

\subsection*{Acknowledgment}

Both authors thank the Alexander von Humboldt Foundation.

\end{document}